\numberwithin{equation}{section}
\newtheorem{Theorem}{Theorem}[section]
\newtheorem{Lemma}[Theorem]{Lemma}
\theoremstyle{definition}
\newtheorem{defn}[Theorem]{Definition}
\newtheorem{Example}[Theorem]{Example}
\newtheorem{Remark}[Theorem]{Remark}
\newcommand*{\Z}{\mathbb Z}		
\newcommand*{\R}{\mathbb R}		
\newcommand*{\C}{\mathbb C}		
\newcommand*{\one}{\mathbbm 1}		
\newcommand*{\tensor}{\otimes}		
\DeclarePairedDelimiter{\scal}{\langle}{\rangle}	
\DeclarePairedDelimiter{\norm}{\lVert}{\rVert} 	
\DeclareMathOperator{\lin}{lin}		
\DeclareMathOperator{\Tr}{Tr}		
\DeclareMathOperator{\id}{id}		
\newcommand*{\alg}{\mathcal}		
\newcommand*{\hilb}{\mathfrak}		
\newcommand*{\aA}{\alg A}			
\newcommand*{\aB}{\alg B}			
\newcommand*{\aG}{\alg G}			
\newcommand*{\hH}{\hilb H}			
\newcommand*{\SU}{\mathrm{SU}}		
\DeclarePairedDelimiterXPP{\lprod}[2]
	{\,_{#1}}						
	{\langle}						
	{\rangle}						
	{}								
	{#2}							
\DeclarePairedDelimiterX{\rprod}[2]{\langle}{\rangle_{#1}}{#2}
\newcommand*{\End}{\mathcal L}
\newcommand*{\acts}{\,.\,}
\DeclarePairedDelimiterX{\ketbra}[2]{\lvert}{\rvert}{#1 \delimsize\rangle \delimsize\langle #2}
\DeclarePairedDelimiterX{\braket}[2]{\langle}{\rangle}{#1 \delimsize\vert #2}
\DeclarePairedDelimiterX{\matrixel}[3]{\langle}{\rangle}{#1 \delimsize\vert #2 \delimsize\vert #3}
\begin{document}

\newcommand{\arXivNumber}{1701.05895}

\renewcommand{\PaperNumber}{062}

\FirstPageHeading

\ArticleName{Part~III, Free Actions of Compact Quantum Groups\\ on $\boldsymbol{{\rm C}^*}$-Algebras}
\ShortArticleName{Part~III, Free Actions of Compact Quantum Groups on ${\rm C}^*$-Algebras}

\Author{Kay SCHWIEGER~$^\dag$ and Stefan WAGNER~$^\ddag$}
\AuthorNameForHeading{K.~Schwieger and S.~Wagner}
\Address{$^\dag$~Iteratec GmbH, Stuttgart, Germany}
\EmailD{\href{mailto:kay.schwieger@gmail.com}{kay.schwieger@gmail.com}}

\Address{$^\ddag$~Blekinge Tekniska H\"ogskola, Sweden}
\EmailD{\href{mailto:stefan.wagner@bth.se}{stefan.wagner@bth.se}}

\ArticleDates{Received April 05, 2017, in f\/inal form August 05, 2017; Published online August 09, 2017}

\Abstract{We study and classify free actions of compact quantum groups on unital ${\rm C}^*$-algebras in terms of generalized factor systems. Moreover, we use these factor systems to show that all f\/inite coverings of irrational rotation ${\rm C}^*$-algebras are cleft.}
\Keywords{free action; ${\rm C}^*$-algebra; quantum group; factor system; f\/inite covering}
\Classification{46L85; 37B05; 55R10; 16D70}

\section{Introduction}

Free actions of classical groups on ${\rm C}^*$-algebras were f\/irst introduced under the name \emph{saturated actions} by Rief\/fel \cite{Rieffel91} (see also \cite{Phi87,Phi09}) and equivalent characterizations where given by Ellwood~\cite{Ell00} and by Gottman, Lazar, and Peligrad \cite{GooLaPe94, Pel88} (see also \cite{BaCoHa15}). This class of actions does not admit degeneracies that may be present in general actions. For this reasons they are easier to understand and to classify. Indeed, for compact Abelian groups, free and ergodic actions, i.e., free actions with trivial f\/ixed point algebra, were completely classif\/ied by Olesen, Pedersen and Takesaki in \cite{OlPeTa80} and independently by Albeverio and H\o egh--Krohn in \cite{AlHo80}. This classif\/ication was generalized to compact non-Abelian groups by the remarkable work of Wassermann \cite{Wass89, Wass88a, Wass88b}. According to \cite{AlHo80,OlPeTa80,Wass88a}, for a compact group $G$ there is a 1-to-1 correspondence between free and ergodic actions of $G$ and unitary 2-cocycles of the dual group. An analogous result in the context of compact quantum groups has been obtained by Bichon, De Rijdt and Vaes~\cite{BiRiVa06}. Extending this classif\/ication beyond the ergodic case is however not straightforward because, even for a commutative f\/ixed point algebra, the action cannot necessarily be decomposed into a~bundle of ergodic actions.

The study of non-ergodic free actions is also motivated by their role as noncommutative principal bundles in noncommutative geometry. In fact, by a classical result, having a free action of a compact group $G$ on a locally compact space $P$ is equivalent saying that $P$ carries the structure of a principal bundle over the quotient $X := P/G$ with structure group~$G$. Moreover, Rief\/fel showed that there is a 1-to-1 correspondence between classical free actions of compact groups on locally compact spaces and free actions of compact groups on commutative ${\rm C}^*$-algebras (cf.\ \cite[Proposition~7.1.12 and Theorem~7.2.6]{Phi87}). From this perspective, the notion of a free action on a ${\rm C}^*$-algebra provides a natural framework for noncommutative principal bundles, which become increasingly prevalent in application to geometry and physics. Regarding classif\/ication, the case of locally trivial principal bundles, that is, if $P$ is glued together from spaces of the form $U \times G$ with an open subset $U \subseteq X$, is very well-understood. This gluing immediately leads to $G$-valued cocycles. The corresponding cohomology theory, called \v{C}ech cohomology, gives a complete classif\/ication of locally trivial principal bundles with base space $X$ and structure group~$G$.

The present paper is a sequel of \cite{SchWa15} and \cite{SchWa16}, where we studied free actions of compact Abelian groups and so-called cleft actions, respectively. To be more precise, we achieved in~\cite{SchWa15} a complete classif\/ication of free, but not necessary ergodic actions of compact Abelian groups on unital ${\rm C}^*$-algebras. This classif\/ication extends the results of~\cite{AlHo80,OlPeTa80} and relies on the fact that the corresponding isotypic components are Morita self-equivalence over the f\/ixed point algebra. Moreover, we provided a classif\/ication of principal bundles with compact Abelian structure group which are not locally trivial. For free actions of non-Abelian compact groups the bimodule structure of the corresponding isotypic components is more subtle. For this reason we concentrated in~\cite{SchWa16} on a simple class of free actions of non-Abelian compact groups, namely cleft actions. Regarded as noncommutative principal bundles, these actions are characterized by the fact that all associated noncommutative vector bundles are trivial. In the present article we turn to the general case of free actions of compact quantum groups. The main objective of this article is to provide a complete description of free actions of compact quantum groups on unital ${\rm C}^*$-algebras in terms of so-called factor systems. Besides an interesting characterization of freeness, our approach uses the fact that nonergodic actions of compact quantum groups can be described in terms of weak unitary tensor functors, i.e., functors from the representation category of the underlying compact quantum group into the category of ${\rm C}^*$-correspondences over the corresponding f\/ixed point algebra (cf.~\cite[Section~2]{Ne13}). More detailedly, the paper is organized as follows.

After some preliminaries, we introduce in Section~\ref{section3} the notion of freeness for compact ${\rm C}^*$-dynamical systems and prove its equivalence to the Ellwood condition (Theorem~\ref{thm:equcondsatact}). We also list a few examples and establish the basis for our later classif\/ication in terms of generalized factor systems. In Section~\ref{section4} we show that every free compact ${\rm C}^*$-dynamical system gives rise to a so-called factor system and that free compact ${\rm C}^*$-dynamical systems can be classif\/ied up to equivalence by their associated factor system (Theorem~\ref{thm:equivalence}). This extends the results presented in part~2 of this series~\cite{SchWa16}, which deals with the particular class of cleft actions. Moreover, we give a characterization of cleft actions in terms of their factor systems. The purpose of Section~\ref{section5} is to show that the information provided by a factor system is enough to explicitly reconstruct the ${\rm C}^*$-dynamical system by adapting results of \cite{Ne13}. This completes our classif\/ication result showing that there is a 1-to-1 correspondence between free compact ${\rm C}^*$-dynamical systems and factor systems up to equivalence and conjugacy, respectively (Theorem~\ref{thm:main_class_thm}). As an application, we show in Section~\ref{section6} that f\/inite coverings of generic irrational rotation ${\rm C}^*$-algebras are always cleft (Theorem~\ref{thm:cleft covering}).

\section{Preliminaries and notations}\label{section2}

Our study is concerned with free actions of compact groups on unital ${\rm C}^*$-algebras and their classif\/ication in terms of generalized factor systems. Consequently, we use and blend tools from operator algebras and representation theory. In this preliminary section we provide def\/initions and notations which are repeatedly used in this article.

\subsection*{$\boldsymbol{{\rm C}^*}$-algebras}

Let $\aA$ be a unital ${\rm C}^*$-algebra. For the unit of $\aA$ we write $\one_\aA$ or simply $\one$. We will frequently deal with partial isometries, i.e., elements $v \in \aA$ such that $v^* v$ and $vv^*$ are projections. In this case $v^* v$ is called the cokernel projection and $vv^*$ the range projection. Moreover, we say that a projection $p$ is larger than the range of an element~$x$ if $px = x$, and we say that $p$ is larger than the cokernel of~$x$ if $xp = x$. All tensor products of ${\rm C}^*$-algebras are taken with respect to the minimal tensor product. We will frequently deal with multiple tensor products of unital ${\rm C}^*$-algebras $\aA$, $\aB$, and $\alg C$. If there is no ambiguity, we regard $\aA$, $\aB$, and $\alg C$ as subalgebras of $\aA \tensor \aB \tensor \alg C$ and extend maps on $\aA$, $\aB$, or $\alg C$ canonically by tensoring with the identity map. For sake of clarity we may occasionally use the leg numbering notation, e.g., for $x \in \aA \tensor \alg C$ we write~$x_{13}$ to denote the corresponding element in $\aA \tensor \aB \tensor \alg C$.

Inner products $\scal{\cdot, \cdot}$ on a Hilbert space is always assumed to be linear in the second component. For a Hilbert space $\hH_1, \hH_2$ we denote by $\End(\hH_1, \hH_2)$ the set of bounded linear operators \smash{$T\colon \hH_1 \to \hH_2$}. If $\hH_1 = \hH_2$ we brief\/ly write $\End(\hH_1)$. We use the Dirac notation to specify operators, i.e., for two vectors $v_1 \in \hH_1$, $v_2 \in \hH_2$ we write $\ketbra{v_2}{v_1}$ for the operator $v \mapsto \scal{v_1,v} v_2$.

\subsection*{Hilbert modules}

For a unital ${\rm C}^*$-algebra $\aA$ a right pre-Hilbert $\aA$-module is a right $\aA$-module $\hilb H$ equipped with a~sesquilinear map $\rprod{\aA}{\cdot, \cdot}\colon \hilb H \times \hilb H \to \aA$ that satisf\/ies the usual axioms of a def\/inite inner product with $\aA$-linearity in the second component. We call $\hH$ a right Hilbert $\aA$-module if $\hH$ is complete with respect to the norm $\norm{x}_{\hilb H} := \norm{\rprod{\aA}{x,x}}^{1/2}$. The right Hilbert \mbox{$\aA$-module} is called full if the two-sided ideal $\rprod{\aA}{\hH, \hH} := \overline{\lin \{ \rprod{\aA}{x,y} \,|\, x,y \in \hH\}}$ is dense in $\aA$. Since every dense ideal of~$\aA$ meets the invertible elements, in this case we have $\rprod{\aA}{\hH, \hH} = \aA$. Left (pre-) Hilbert $\aA$-modules are def\/ined in a similar way.

A~correspondence over $\aA$, or a right Hilbert $\aA$-bimodule, is a $\aA$-bimodule $\hilb H$ equipped with a $\aA$-valued inner product $\scal{\cdot, \cdot}_\aA$ which turns it into a right Hilbert $\aA$-module such that the left action of $\aA$ on $\hilb H$ is via adjointable operators. For two correspondences $\hilb H$ and $\hilb K$ over $\aA$ we denote by $\hilb H \tensor_\aA \hilb K$ their tensor product, on which the inner product is given by
	$\scal{x_1 \tensor y_1, x_2 \tensor y_2}_\aA = \scal{y_1, \scal{x_1,x_2}_\aA \,. \, y_2}_\aA$
for all $x_1, x_2 \in \hilb H$ and $y_1, y_2 \in \hilb K$.

\subsection*{Compact quantum groups}

We rely on the ${\rm C}^*$-algebraic notion of compact quantum groups as introduced by Worono\-wicz~\cite{Wor87a}. For an introduction and further details we recommend \cite{Commer16,NeTu13,Timm08}. A compact quantum group is given by a unital ${\rm C}^*$-algebra $\aG$ together with a (usually implicit) faithful, unital \mbox{$^*$-homo\-morphism} $\Delta\colon \aG \to \aG \tensor \aG$ satisfying the identity $(\Delta \tensor \id) \circ \Delta = (\id \tensor \Delta) \circ \Delta$ and such that $\Delta(\aG) (\one \tensor \aG)$ is dense in $\aG \tensor \aG$. It can be shown that there is a unique state $h\colon \aG \to \C$ such that $(\id \tensor h) \circ \Delta = h = (h \tensor \id) \circ \Delta$ (see~\cite{Wor87a}). This state is called the \emph{Haar state} of $\aG$. It is not faithful in general but via the GNS-construction we may replace $\aG$ by its reduced version on which the Haar state is faithful. Since $\aG$ and its reduce version behave identically with respect to their representation theory and their actions (see \cite[Section~4]{Commer16}), we will throughout the text assume that the Haar state on $\aG$ is faithful.

A unitary \emph{representation} of a compact quantum group $\aG$ on a f\/inite-dimensional Hilbert space~$V$ is a unitary element $\pi \in \End(V) \tensor \aG$ such that $\id \tensor \Delta(\pi) = \pi_{12} \pi_{13}$ in $\End(V) \tensor \aG \tensor \aG$. Unless explicitly stated otherwise, all representations are assumed unitary and f\/inite-dimensional. We recall that the set of equivalence classes of irreducible representations~$\hat \aG$ is countable and that the matrix coef\/f\/icients of all $\pi \in \hat \aG$ generate a dense $^*$-subalgebra of $\hat \aG$. Since all constructions behave naturally with respect to intertwiners we will not distinguish between a representation and its equivalence class. The tensor product of two representations $(\pi,V)$ and $(\rho, W)$ of~$\aG$ is the representation $(\pi \tensor \rho, V \tensor W)$ given by the unitary element $\pi \tensor \rho := \pi_{13} \rho_{23}$ in $\End(V) \tensor \End(W) \tensor \aG$. We also recall that for a representation $(\pi, V)$ of $\aG$ the contragradient representation is in general not unitary. Its normalization $(\bar\pi, \bar V)$ is called the \emph{conjugated representation}.

Some care has to be taken in the case that the Haar state is not tracial. Then the matrix coef\/f\/icients with respect to some chosen basis of $V$ are not orthogonal in general. However, if $\pi$ is irreducible, there is a unique positive, invertible operator $Q(\pi) \in \End(V)$ normalized to $\Tr[ Q(\pi)] = \Tr [ Q(\pi)^{-1} ]$ with
\begin{gather*}	\label{eq:*}
	\frac{\Tr[ Q(\pi) T]}{\Tr[ Q(\pi) ]}  \one_V= \id \tensor h \bigl( \pi (T\tensor \one_\aG) \pi^* \bigr),\qquad
	\frac{\Tr[ Q(\pi)^{-1} T ]}{\Tr[ Q(\pi) ]}  \one_V= \id \tensor h \bigl( \pi^* (T \tensor \one_\aG) \pi \bigr)
\end{gather*}
for every $T \in \End(V)$. The number $d_\pi := \Tr[Q(\pi)]$ is called the \emph{quantum dimension} of~$\pi$. The quantum dimension behaves nicely with respect to taking direct sums, tensor products, and conjugated representations.
An important detail for us is the fact that we may f\/ix intertwiners $R\colon \C \to V \tensor \bar V$ and $\bar R\colon \C \to \bar V \tensor V$ for all irreducible representation such that \mbox{$(R^* \tensor \id_V) (\id_V \tensor \bar R) = \id_V$}. In terms of an orthonormal basis $e_1, \dots, e_n \in V$ and its respective conjugated basis $\bar e_1, \dots, \bar e_n \in \bar V$ we typically choose
\begin{gather*}
R(1) = \sum\limits_{i=1}^n Q(\pi)^{1/2} e_i \tensor \bar e_i.
\end{gather*}

\subsection*{Actions of compact quantum groups}

An \emph{action} of a compact quantum group $\aG$ on a unital ${\rm C}^*$-algebra $\aA$ is a faithful, unital $^*$-homo\-morphism $\alpha\colon \aA \to \aA \tensor \aG$ that satisf\/ies $(\id \tensor \Delta) \circ \alpha = (\alpha \tensor \id) \circ \alpha$ and such that $(\one \tensor \aG)\, \alpha(\aA)$ is dense in~$\aA \tensor \aG$. Since we assume that the Haar state is faithful, the map $P_1 := (\id \tensor h) \circ \alpha$ is a~faithful conditional expectation onto the f\/ixed point algebra
\begin{align*}
\aA^\aG := \{x \in \aA \,|\, \alpha(x) = x \tensor \one_\aG \}.
\end{align*}
In particular, $\aA$ turns into a right pre-Hilbert $\aA^\aG$-bimodule with the $\aA^\aG$-valued inner product \mbox{$\rprod{\aA^\aG}{x,y} := P_1(x^* y)$} for $x,y \in \aA$. For each irreducible representation $\pi \in \hat G$ the projection $P_\pi\colon \aA \to \aA$ onto the \emph{$\pi$-isotypic component} $A(\pi) := P_\pi(\aA)$ is given by
\begin{gather*}
	P_\pi(a) := d_\pi  \Tr \tensor \id_\aA \tensor h \bigl( \bar\pi_{13} \, \alpha(a)_{23} \, Q(\bar\pi)^{-1}_1 \bigr), \qquad
	a \in \aA,
\end{gather*}
where the leg numbering refers to $\End(\bar V) \tensor \aA \tensor \aG$ (see \cite[Theorem~1.5]{Podles95}). The set $A(\pi)$ is in fact closed with respect to the inner product (see \cite[Corollary~2.6]{CoYa13a}) and hence a correspondence over~$\aA^\aG$. Furthermore, isotypic components for dif\/ferent $\pi \in \hat \aG$ are orthogonal with respect to the inner product and the sum $\sum\limits_{\pi \in \hat \aG} A(\pi)$ is dense in~$\aA$.

\section[Free ${\rm C}^*$-dynamical systems]{Free $\boldsymbol{{\rm C}^*}$-dynamical systems}\label{section3}

Throughout the presentation we discuss compact ${\rm C}^*$-dynamical systems $(\aA, \aG, \alpha)$, by which we mean a unital ${\rm C}^*$-algebra $\alg A$, a compact quantum group $\aG$, and an action $\alpha\colon \aA \to \aA \tensor \aG$. Given such a system, we recall that $\aA$ can be decomposed in terms of its isotypic compo\-nents~$A(\pi)$, $\pi \in \hat \aG$, and that each $A(\pi)$ is a correspondence over the f\/ixed point algebra $\aA^\aG$. For each irreducible representation $(\pi,V) \in \hat \aG$ we denote by $\Gamma(V)$ the multiplicity space of the conjugated representation~$\bar\pi$, which can be written in the form
\begin{gather*}
	\Gamma(V) = \{ x \in V \tensor \aA \,|\, \pi_{13} \id_V \tensor \alpha(x) = x \tensor \one_\aG \}.
\end{gather*}
This space is naturally a correspondence over $\aA^\aG$ with respect to the usual left and right multiplication and the restriction of the inner product $\rprod{\aA^\aG}{v \tensor a, w \tensor b} := \scal{v,w} \, a^*b$ for all $v,w \in V$ and $a, b \in \aA$. The $\pi$-isotypic component $A(\pi)$ is then as a correspondence isomorphic to $V \tensor \Gamma(\bar V)$ via the map $\varphi_\pi\colon V \tensor \Gamma(\bar V) \to A(\pi)$, $\varphi_\pi := \smash{d_\pi^{-1/2}} R^* \tensor \id_\aA$.

The mapping $(\pi, V) \mapsto \Gamma(V)$ can be extended to an additive functor from the representation category of $\aG$ into the category of ${\rm C}^*$- correspondences over~$\aA^\aG$. Since $\aA$ is the closure of the direct sum of its isotypic components and every isotypic component $A(\pi)$ is isomorphic to $V \tensor \Gamma( \bar V)$, this functor allows us to reconstruct the Hilbert $\aA^\aG$-bimodule structure of $\aA$ and the action~$\alpha$ up to a suitable closure. To recover the multiplication on $\aA$ we may look at the family of maps
\begin{gather}\label{eq:m(pi,rho)}
	m_{\pi, \rho}\colon  \ \Gamma(V) \tensor_\aB \Gamma(W) \to \Gamma(V \tensor W), \qquad 	m_{\pi, \rho}(x \tensor y) := x_{13} y_{23},
\end{gather}
for representations $(\pi,V)$, $(\rho,W)$ of $\aG$. Here the subindices on the right hand side refer to the leg numbering in $V \tensor W \tensor \aA$, that is, for elementary tensors $x = (v \tensor a)$ and $y = (w \tensor b)$ we write $x_{13} \, y_{23} = v \tensor w \tensor a b$ ($v \in V$, $w \in W$, $a,b \in \aA$). The functor $\Gamma\colon V \mapsto \Gamma(V)$ and the transformations $(m_{\pi,\rho})_{\pi,\rho}$ constitute a so-called weak tensor functor and allow to recover the reduced form of the compact ${\rm C}^*$-dynamical system $(\aA, \aG, \alpha)$ up to isomorphisms (see \cite[Section~2]{Ne13}).

To obtain a more concrete representation we restrict ourselves to the class of free action in the following sense. In addition to the above correspondence structure, we equip each multiplicity space $\Gamma(V)$ with the left $\End(V) \tensor \aA$-valued inner product given by
\begin{gather*}
\lprod{\End(V) \tensor \aA}{v \tensor a, w \tensor b} := \ketbra{v}{w} \tensor ab^*
\end{gather*}
for $v,w \in V$ and $a,b \in \aA$. A few moments thought show that this left inner product takes values in the ${\rm C}^*$-algebra $\{ x \in \End(V) \tensor \aA \,|\, \alpha(x) = \pi(x \tensor \one_\aG) \pi^* \}$ and that the only missing feature for $\Gamma(V)$ to be a Morita equivalence bimodule is that in general the left inner product need not be full. This requirement is what we demand for a free action:

\begin{defn} 	\label{def:freeness}
	A compact ${\rm C}^*$-dynamical system $(\aA, \aG, \alpha)$ is called \emph{free} if for every $(\pi,V) \in \hat \aG$ we have $\one \in \lprod{\End(V) \tensor \aA}{\Gamma(V), \Gamma(V)}$.
\end{defn}

There are various non-equivalent notions of freeness in the literature (see, e.g., \cite{EchNeOy09,Phi09} and references therein). The one given here was introduced for actions of classical compact groups by Rief\/fel \cite{Rieffel91} under the term saturated actions (see also \cite[Corollary~3.5]{Pel88} and \cite[Lemma~3.1]{GooLaPe94}) and already used in the other parts of this series \cite{SchWa15,SchWa16}, where some equivalent conditions are summarized. A seemingly dif\/ferent version of freeness for actions of compact quantum groups was recently exploited by De~Commer et al.\ \cite{BaCoHa15,CoYa13a} and is due to D.A.~Ellwood \cite{Ell00}. We recall that a~compact ${\rm C}^*$-dynamical system $(\aA, \aG, \alpha)$ is said to satisfy the \emph{Ellwood condition} if $(\aA \otimes \one)\alpha(\aA)$ is dense in~$\aA \tensor \aG$. For convenience we now summarize the equivalent conditions of freeness and provide proper references for the implications.

\begin{Theorem} \label{thm:equcondsatact}
Let $(\aA,\aG,\alpha)$ be a compact ${\rm C}^*$-dynamical system. Then the following conditions are equivalent:
	\begin{itemize}\itemsep=0pt
\item[$(a)$] The ${\rm C}^*$-dynamical system $(\aA,\aG,\alpha)$ is free.
\item[$(b)$] For all representations $(\pi,V)$, $(\rho,W)$ of $\aG$ the map $m_{\pi, \rho}$ defined in equation \eqref{eq:m(pi,rho)}
		has dense range or, equivalently, is surjective.
\item[$(c)$] The ${\rm C}^*$-dynamical system $(\aA,G,\alpha)$ satisfies the Ellwood condition.
	\end{itemize}
\end{Theorem}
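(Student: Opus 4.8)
The plan is to establish the cycle $(a)\Rightarrow(b)\Rightarrow(c)\Rightarrow(a)$, exploiting the fact that all three conditions are really statements about the multiplicity correspondences $\Gamma(V)$ and the transformations $m_{\pi,\rho}$. Recall from the preliminaries that $\aA$ is the closed direct sum of the isotypic components $A(\pi) \cong V \tensor \Gamma(\bar V)$, so every condition can be checked isotypic-component by isotypic-component, with only finitely many representations contributing to a given approximation.

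For $(a)\Rightarrow(b)$: fix $(\pi,V)$ and $(\rho,W)$. Freeness for the representation $\bar\rho$ means $\one \in \lprod{\End(W)\tensor\aA}{\Gamma(\bar W),\Gamma(\bar W)}$, i.e.\ there are $x_i, y_i \in \Gamma(\bar W)$ with $\sum_i \ketbra{\cdot}{\cdot}$-type sums producing the unit; equivalently, the left inner product is full on $\Gamma(\bar W)$. I would translate this into the statement that, after tensoring with $\id_{\Gamma(V)}$ and using the Morita-type identity, elements of $\Gamma(V)$ can be written as sums $\sum_i m_{\pi',\rho}(\xi_i \tensor \eta_i)$ up to the compatibility between $\Gamma(V)\tensor_\aB\Gamma(W)$ and $\Gamma(V\tensor W)$ furnished by the weak tensor functor structure. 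Concretely, since the left $\End(W)\tensor\aA$-valued inner product on $\Gamma(W)$ is full, one can "insert a resolution of the identity" in the $W$-leg of any element of $\Gamma(V\tensor W)$ and thereby write it as a finite sum of products $x_{13}y_{23}$ with $x \in \Gamma(V)$, $y \in \Gamma(W)$; this is exactly surjectivity of $m_{\pi,\rho}$. Because these are maps of Hilbert $\aA^\aG$-modules, dense range and surjectivity coincide (a closed submodule containing a dense ideal worth of inner products is everything, as noted in the Hilbert-module preliminaries for full modules).

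For $(b)\Rightarrow(c)$: the Ellwood condition asks that $(\aA\tensor\one)\alpha(\aA)$ be dense in $\aA\tensor\aG$. Using the isomorphisms $\varphi_\pi\colon V\tensor\Gamma(\bar V)\to A(\pi)$ and the fact that $\alpha$ restricted to $A(\pi)$ is implemented by $\pi$, one computes $\alpha(\varphi_\pi(v\tensor x))$ explicitly in $\End(V)\tensor\aA\tensor\aG$ and sees that the span of $(\aA\tensor\one)\alpha(A(\pi))$ over all $\pi$ reproduces, leg by leg, all matrix coefficients of all irreducible $\pi$ — provided one can multiply together elements coming from different isotypic components, which is precisely what surjectivity of the $m_{\pi,\rho}$ guarantees (the product $A(\pi)\cdot A(\rho)$ fills up $A(\pi\tensor\rho)$). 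Since matrix coefficients are dense in $\aG$ and $\aA$ is the closure of $\sum_\pi A(\pi)$, density of $(\aA\tensor\one)\alpha(\aA)$ follows. For $(c)\Rightarrow(a)$: conversely, if $(\aA\tensor\one)\alpha(\aA)$ is dense, then projecting onto the $\pi$-isotypic part (using $P_\pi$ in the appropriate leg) shows that the unit of $\{x\in\End(V)\tensor\aA \mid \alpha(x)=\pi(x\tensor\one)\pi^*\}$ lies in the closure of $\lprod{\End(V)\tensor\aA}{\Gamma(V),\Gamma(V)}$; but this left inner product space is a two-sided ideal there, so containing $\one$ in its closure forces $\one$ to be in it, giving freeness.

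I expect the main obstacle to be the bookkeeping in $(a)\Leftrightarrow(b)$: carefully identifying the fullness of the left $\End(W)\tensor\aA$-valued inner product on $\Gamma(W)$ with surjectivity of $m_{\pi,\rho}$ requires keeping straight the interplay between the two inner products on the multiplicity spaces, the passage between $V$ and $\bar V$, and the coherence isomorphisms $\Gamma(V)\tensor_\aB\Gamma(W)\cong \Gamma(V\tensor W)$ of the weak tensor functor. The operator-algebraic subtlety — that dense range upgrades to surjectivity — is handled by the Hilbert-module remark already recorded, namely that a dense ideal in a unital ${\rm C}^*$-algebra meets the invertibles; beyond that the argument is a matter of unwinding the leg-numbering conventions and the explicit formulas for $P_\pi$ and $\varphi_\pi$.
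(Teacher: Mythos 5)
Your step $(a)\Rightarrow(b)$ is essentially sound and in fact coincides with the paper's own alternative remark: freeness for $(\rho,W)$ gives a finite family $s_1,\dots,s_n\in\Gamma(W)$ with $\sum_k s_k^{} s_k^*=\one$ in $\End(W)\tensor\aA$ (Lemma~\ref{lem:freeness}), i.e., an invariant basis of the right Hilbert $\aA$-module $W\tensor\aA$, and inserting this resolution of identity in the $W$-leg of any $z\in\Gamma(V\tensor W)$ writes $z=\sum_k x_{k,13}\,(s_k)_{23}$ with $x_k\in\Gamma(V)$, so $m_{\pi,\rho}$ is surjective. (Minor slip: the fullness you need is that of the left $\End(W)\tensor\aA$-valued inner product on $\Gamma(W)$ itself, not on $\Gamma(\bar W)$.) Your $(c)\Rightarrow(a)$ sketch also follows the route the paper cites from De~Commer--Yamashita: the span of left inner products is an ideal in $C(\pi)$, density forces it to meet the invertibles, hence to contain~$\one$.

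The genuine gap is $(b)\Rightarrow(c)$. The paper does not prove this implication either; it cites Baum--De~Commer--Hajac for the equivalence $(b)\Leftrightarrow(c)$, and that is a real theorem, not bookkeeping. Your argument --- ``surjectivity of $m_{\pi,\rho}$ lets one multiply isotypic components, hence $(\aA\tensor\one)\alpha(\aA)$ reproduces all matrix coefficients'' --- is a non sequitur as stated: surjectivity of the $m_{\pi,\rho}$ is a statement about products inside $\aA$ and does not by itself produce elements of $\aA\tensor\aG$ of the form $a\tensor\pi_{ij}$. The mechanism that actually yields $a\tensor\pi_{ij}\in(\aA\tensor\one)\alpha(\aA)$ is a finite invariant resolution of identity: if $x^{(l)}\in\Gamma(V)$ satisfy $\sum_l x^{(l)}(x^{(l)})^*=\one$, then $\sum_{l}\bigl(a\,x^{(l)}_j\tensor\one\bigr)\,\alpha\bigl((x^{(l)}_i)^*\bigr)=a\tensor\pi_{ij}$ --- but that resolution is precisely condition $(a)$, not $(b)$. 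To close your cycle you would first have to extract $(a)$ from $(b)$, e.g.\ by taking $\rho=\bar\pi$, noting $R(1)\tensor\one_\aA\in\Gamma(V\tensor\bar V)$, using surjectivity of $m_{\pi,\bar\pi}$ to approximate it by finite sums $\sum_i x_{i,13}\,y_{i,23}$, and contracting with the conjugation intertwiners to land $\one$ in the (ideal generated by the) left inner products on $\Gamma(V)$; only then does the direct computation above give the Ellwood condition. As written, the chain $(a)\Rightarrow(b)\Rightarrow(c)\Rightarrow(a)$ is broken at its middle link.
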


The equivalence between (b) and (c) was proved quite recently in \cite[Theorem~0.4]{BaCoHa15}. For the implication (c) $\Rightarrow$ (a) we refer to the proof of \cite[Corollary~5.6]{CoYa13a}. Finally, the implication \mbox{(a) $\Rightarrow$ (b)} will follow immediately from the independent later results of Section~\ref{section4} and from Lemma~\ref{lem:unitary tensor functor}. Alternatively, (b) follows from (a) by observing that an equivalent way to formulate the condition in Def\/inition~\ref{def:freeness} is by saying that, for every representation $(\pi,V)$ of $\aG$, the right Hilbert $\aA$-module~$V \otimes \aA$ has a basis (in the sense of Hilbert modules) consisting of invariant elements.

We continue with a reformulation of freeness which will be convenient for our description of free ${\rm C}^*$-dynamical systems in terms of generalized factor systems.

\begin{Lemma}	\label{lem:freeness}
	A compact ${\rm C}^*$-dynamical system $(\aA, \aG, \alpha)$ is free if and only if for every representation $(\pi, V)$ of $\aG$ there is a finite-dimensional Hilbert space $\hH$ and a coisometry $s \in \End(\hH, V) \tensor \aA$ with $\pi_{13} \id \tensor \alpha(s)=s \tensor \one_\aG$.
\end{Lemma}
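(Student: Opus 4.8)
The plan is to unwind Definition~\ref{def:freeness} and recognise the freeness condition as the assertion that the identity of the $\mathrm{C}^*$-algebra $\End(V)\tensor\aA$ lies in the closed linear span of finite sums $\sum_i \lprod{\End(V)\tensor\aA}{x_i,x_i}$ with $x_i\in\Gamma(V)$. First I would note that, since this set of inner-product values is a two-sided ideal in the fixed $\mathrm{C}^*$-algebra and $\one$ is invertible, a standard approximate-identity/invertibility argument (as already used for the fullness of $\rprod{\aA}{\hH,\hH}$ in the preliminaries) shows that freeness is equivalent to the \emph{exact} equality $\one\in\lprod{\End(V)\tensor\aA}{\Gamma(V),\Gamma(V)}$, i.e.\ there exist finitely many $x_1,\dots,x_k\in\Gamma(V)$ with $\sum_{i=1}^k \lprod{\End(V)\tensor\aA}{x_i,x_i}=\one_{\End(V)}\tensor\one_\aA$.

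Next I would assemble these $x_i$ into a single element of a larger leg. Set $\hH:=\C^k$ with orthonormal basis $f_1,\dots,f_k$ and define $s\in\End(\hH,V)\tensor\aA$ by $s:=\sum_{i=1}^k \ketbra{x_i}{f_i}$, meaning $s(f_i\tensor a)=x_i\cdot a$ after the obvious identification $\End(\hH,V)\tensor\aA\cong\Hom_\aA(\hH\tensor\aA,V\tensor\aA)$. A direct computation gives $ss^*=\sum_{i,j}\ketbra{x_i}{f_i}\,\ketbra{f_j}{x_j}{}^{\!*}=\sum_i\lprod{\End(V)\tensor\aA}{x_i,x_i}=\one$, so $s$ is a coisometry. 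Moreover each $x_i\in\Gamma(V)$ satisfies $\pi_{13}\,\id_V\tensor\alpha(x_i)=x_i\tensor\one_\aG$ by the very definition of $\Gamma(V)$ recalled in Section~\ref{section3}; since the $f_i$ are $\aG$-fixed, this passes to $s$ and yields $\pi_{13}\,\id\tensor\alpha(s)=s\tensor\one_\aG$. That establishes the ``only if'' direction.

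For the converse, suppose such a coisometry $s\in\End(\hH,V)\tensor\aA$ exists with $\pi_{13}\,\id\tensor\alpha(s)=s\tensor\one_\aG$. Fixing an orthonormal basis $f_1,\dots,f_k$ of $\hH$ and setting $x_i:=s(f_i\tensor\one_\aA)\in V\tensor\aA$, the intertwining property of $s$ forces each $x_i$ to satisfy the defining equation of $\Gamma(V)$, hence $x_i\in\Gamma(V)$; and the coisometry relation $ss^*=\one$ unwinds to $\sum_i\lprod{\End(V)\tensor\aA}{x_i,x_i}=\one$, so $\one\in\lprod{\End(V)\tensor\aA}{\Gamma(V),\Gamma(V)}$ and the system is free. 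I expect the only mildly delicate point to be bookkeeping: keeping the module identifications $\End(\hH,V)\tensor\aA\cong\Hom_\aA(\hH\tensor\aA,V\tensor\aA)$ and the leg-numbering conventions consistent so that ``$\pi_{13}\,\id\tensor\alpha(s)=s\tensor\one_\aG$'' genuinely matches the $\Gamma(V)$-condition componentwise; the algebra itself is routine.
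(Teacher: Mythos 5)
Your ``if'' direction and the way you assemble a coisometry out of finitely many elements of $\Gamma(V)$ are exactly the paper's argument and are fine. The gap is in the step you dismiss as a ``standard approximate-identity/invertibility argument''. What that argument actually yields is the following: the linear span of the left inner products is an ideal in the unital ${\rm C}^*$-algebra $C(\pi):=\{x\in\End(V)\tensor\aA \mid \id\tensor\alpha(x)=\pi_{13}(x\tensor\one_\aG)\pi_{13}^*\}$ (not in $\End(V)\tensor\aA$, and its values do not lie over the fixed point algebra), so if $\one$ lies in its closure the span contains an invertible element and hence $\one=\sum_{i=1}^{n}\lprod{\End(V)\tensor\aA}{x_i,y_i}$ for finitely many $x_i,y_i\in\Gamma(V)$. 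This is an \emph{off-diagonal} reconstruction of the identity; your ``i.e.''\ silently upgrades it to the diagonal form $\sum_i\lprod{\End(V)\tensor\aA}{x_i,x_i}=\one$, which is precisely what is needed to obtain $ss^*=\one$ rather than merely two elements $s,t$ with invariant columns and $st^*=\one$. Polarization does not close this gap, since it expresses mixed inner products through diagonal ones only with complex coefficients, which cannot be absorbed into the vectors. This diagonalization is exactly why the paper proves Lemma~\ref{lem:diagonalize} in Appendix~\ref{appendixA}: regarding $\Gamma(V)$ as an equivalence bimodule between $C(\pi)$ and $\aA^\aG$, it factors the Gram matrix $Y_{i,j}=\rprod{\aA^\aG}{y_i,y_j}$ as $RR^*$ and replaces the $x_i$ by $z_k=\sum_i x_i\acts R_{i,k}$, using the compatibility $\lprod{\End(V)\tensor\aA}{x,y}\acts z=x\acts\rprod{\aA^\aG}{y,z}$. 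Your proposal contains no substitute for this step, so as written the ``only if'' direction does not go through.

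A short repair in your matrix-minded setup is available, but it has to be said: writing $X,Y\in\End\bigl(\C^n,V\bigr)\tensor\aA$ for the elements with columns $x_i$ and $y_i$, the identity $\one=XY^*$ gives $\one=XY^*YX^*\le\norm{Y}^2\,XX^*$, so $b:=XX^*=\sum_i\lprod{\End(V)\tensor\aA}{x_i,x_i}$ is positive and invertible in $C(\pi)$; since $C(\pi)$ leaves $\Gamma(V)$ invariant (a one-line computation with the defining relations), the elements $b^{-1/2}\acts x_i$ lie in $\Gamma(V)$ and satisfy $\sum_i\lprod{\End(V)\tensor\aA}{b^{-1/2}\acts x_i,b^{-1/2}\acts x_i}=b^{-1/2}bb^{-1/2}=\one$, after which your construction of $s$ proceeds verbatim. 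Finally, note a smaller point: Definition~\ref{def:freeness} concerns only irreducible representations, whereas the lemma asserts a coisometry for \emph{every} finite-dimensional representation; you should add the easy remark that for $\pi\cong\pi_1\oplus\dots\oplus\pi_m$ the block-diagonal direct sum of coisometries for the irreducible summands does the job.
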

\begin{proof}For the ``if''-implication let $(\pi, V) \in \hat \aG$ and let $s \in \End(\hH,V) \tensor \aA$ be a coisometry with $\pi \alpha(s) = s$. Moreover, f\/ix an orthonormal basis of $\hH$ and denote by $s_k \in V \tensor \aA$ the columns of~$s$. Then
\begin{gather*}
\sum_{k=1}^n \lprod{\End(V) \tensor \aA}{s_k,s_k} = s s^* = \one.
\end{gather*}
For the converse implication, f\/irst observe that freeness of the ${\rm C}^*$-dynamical system $(\aA, \aG, \alpha)$ implies that, for each representation $(\pi,V)$ of~$\aG$, the space $\Gamma(V)$ is a Morita equivalence bimodule between the ${\rm C}^*$-algebras
$C(\pi) := \{x \in \End(V) \tensor \aA \,|\, \id \tensor \alpha(x) = \pi_{13} (x \tensor \one_\aG) \pi_{13}^* \}$  and~$\aA^\aG$.
Since~$C(\pi)$ is unital, there are elements $s_1, \dots, s_n \in \Gamma(V)$ such that
\begin{gather*}
\sum\limits_{k=1}^n \lprod{\End(V) \tensor \aA}{s_k, s_k} = \one
\end{gather*} (see Lemma~\ref{lem:diagonalize}). Now put \smash{$\hH := \C^n$} and denote by $s \in \End(\hH, V) \tensor \aA$ the element with columns $s_1, \dots, s_n$ in the canonical orthonormal basis. Then $\pi \alpha(s) = s \tensor \one_\aG$, since $s_k \in \Gamma(V)$, and further
\begin{gather*}
ss^* = \sum_{k=1}^n \lprod{\End(V) \tensor \aA}{s_k,s_k} = \one.\tag*{\qed}
\end{gather*}\renewcommand{\qed}{}
\end{proof}

\begin{Remark}
	For each representation $\pi$ of $\aG$ there is a minimal dimension, say $n(\pi)$, that the Hilbert space~$\hH$ in Lemma~\ref{lem:freeness} can take. Clearly we have $n(1) = 1$, $n(\pi \oplus \rho) \le n(\pi) + n(\rho)$, and $n(\pi \tensor \rho) \le n(\pi) \cdot n(\rho)$, using a variant of the multiplication map~$m_{\pi, \rho}$.
\end{Remark}

Suppose we f\/ix a Hilbert space $\hH_\pi$ and a respective coisometry $s(\pi)$ for each irreducible representation $\pi \in \hat \aG$. Then we may extend $\pi \mapsto \hH_\pi$ to an additive functor and \mbox{$\pi \mapsto s(\pi)$} to a family of coisometries that satisf\/ies the condition in Lemma~\ref{lem:freeness} and behaves naturally with respect to intertwiners. However, the functor $\pi \mapsto \hH_\pi$ is in general not a tensor functor and $s(\pi \tensor \rho)$ has no immediate relation to $s(\pi)$ and $s(\rho)$.

\looseness=-1 In the remaining part of this section we present a bouquet of examples. To begin with, we recall that Def\/inition~\ref{def:freeness} actually extends the classical notion of free actions of compact groups. In fact, given a compact space $P$ and a compact group $G$, it is a consequence of \cite[Proposition~7.1.12 and Theorem~7.2.6]{Phi87} that a continuous group action $\sigma\colon P\times G\rightarrow P$ is free, i.e., its stabilizer groups vanish at each point, if and only if the induced ${\rm C}^*$-dynamical system $(C(P),G,\alpha_{\sigma})$ is free in the sense of Def\/inition~\ref{def:freeness}. Therefore, Def\/inition~\ref{def:freeness} also provides a~natural framework for noncommutative principal bundles. Furthermore, we would like to point out that Def\/inition~\ref{def:freeness} characterizes classical free group actions in terms of associated vector bundles and the condition therein means that the associated vector bundles have non-degenerate f\/ibres (see, e.g.,~\cite{Wa12}).

\begin{Example} 	\label{expl:Connes-Landi}
	We would like to recall a ${\rm C}^*$-algebraic version of the nontrivial Hopf--Galois extension studied in \cite{LaSu05} (see also \cite{CoLa01}). Let $\theta \in \R$ be f\/ixed and let $\theta'$ be the skewsymmetric $4 \times 4$-matrix with $\theta_{1,2}' = \theta_{3,4}' = 0$ and $\theta'_{1,3} = \theta'_{1,4} = \theta_{2,3}' = \theta'_{2,4} = \theta/2$. We consider the universal unital ${\rm C}^*$-algebra $\alg A(\mathbb S_{\theta'}^7)$ generated by normal elements $z_1, \dots, z_4$ satisfying the relations
	\begin{gather*}	
		z_i z_j = e^{2\pi\imath \theta'_{i,j}}  z_j z_i,\qquad z_j^* z_i = e^{2\pi \imath \theta'_{i,j}} z_i z_j^*,\qquad 	\sum_{k=1}^4 z_k^* z_k^{} = \one
	\end{gather*}
	for all $1 \le i,j \le 4$. A few moments thought show that the group $G =\SU(2)$ acts strongly continuously on $\alg A(\mathbb S_{\theta'}^7)$ via the $^*$-automorphisms $(\alpha_U)_{U \in \SU(2)}$ given on generators by
	\begin{gather*}
		\alpha_U\colon  (z_1, \dots, z_4) \mapsto (z_1, \dots, z_4) \begin{pmatrix} U & 0 \\ 0 & U \end{pmatrix}.
	\end{gather*}
	Moreover, the f\/ixed point algebra turns out to be the universal unital ${\rm C}^*$-algebra $\alg A(\mathbb S_\theta^4)$ generated by normal elements $w_1, w_2$ and a self-adjoint element $x$ satisfying
	\begin{gather*}
w_1 w_2 = e^{2\pi\imath\theta}  w_2 w_1, \qquad w_2^* w_1 = e^{2\pi\imath \theta} w_1 w_2^*, \qquad \text{and} \qquad w_1^* w_1 + w_2^* w_2 + x^*x = \one.
	\end{gather*}
For $\theta=0$ all algebras are commutative and we recover the classical 7-dimensional Hopf f\/ibration of the 4-sphere, which is a well-known example of a non-trivial principal bundle. Many arguments from the classical case can be extended to arbitrary $\theta$. In particular, it is easily checked that for the fundamental 2-dimensional representation $(\pi_1,\mathbb{C}^2)$ of $\SU(2)$ a coisometry \smash{$s \in \End(\C^4, \C^2) \tensor \alg A(\mathbb S_{\theta'}^7)$} with $U \alpha_U(s) = s$ for all $U \in \SU(2)$ is given by
	\begin{gather*}
		s := \begin{pmatrix}
			z_1^* & -z_2 & z_3^* & -z_4
			\\
			z_2^* & z_1 & z_4^* & z_3
		\end{pmatrix}.
	\end{gather*}
Since every irreducible representation of $\SU(2)$ can be obtained as a subrepresentation of a~suitable tensor powers of $\pi_1$, we may take tensor products of $s$ with itself in order to f\/ind a~suitable coisometry for every representation $\pi$ of $\SU(2)$. We conclude that the compact ${\rm C}^*$-dynamical system $\bigl( \alg A(\mathbb S_{\theta'}^7), \SU(2), \alpha \bigr)$ is free.
\end{Example}

\begin{Example}\label{expl:free and ergodic}\quad
\begin{enumerate}\itemsep=0pt
	\item
	Bichon, De Rijdt and Vaes introduce in \cite{BiRiVa06} the notion of quantum multiplicity of an irreducible representation in an ergodic action of a compact quantum group and classify ergodic actions of so-called full quantum multiplicity in terms of unitary f\/iber functors. It follows from \cite[Theorem~3.9]{BiRiVa06} that these actions are free.
	\item
	According to \cite[Corollary~5.8]{BiRiVa06}, for suf\/f\/iciently small parameters $q$ the compact quantum group~$\SU_q(2)$ admits an ergodic action of full quantum multiplicity such that the multiplicity of the fundamental representation is arbitrarily large. Hence, there are plenty of free and ergodic actions of $\SU_q(2)$.
\end{enumerate}
\end{Example}

\begin{Example} \label{ex:Cuntz}
	Let $\aG$ be an $R^+$-deformation (see, e.g., \cite[Theorem~2.1]{Ban99}) of a semisimple compact Lie group. Furthermore, let $(\pi,\C^d)$ be a faithful representation of $\aG$. Then \mbox{\cite[Proposition~7.3]{Gabriel2014}} implies that the induced action $\alpha$ of $\aG$ on the Cuntz algebra $\mathcal{O}_d$ def\/ined by
\begin{gather*}
\alpha(S_i):= \sum\limits^d_{j=1} S_j \otimes \pi_{j,i}
\end{gather*} is free, where $S_1,\ldots S_d$ denote the generators of $\mathcal{O}_d$. It is not hard to check that for the representation $(\pi,\C^d)$ a coisometry $s \in \End(\C, \C^d) \tensor \mathcal{O}_d$ with $\pi_{13} \id \tensor \alpha(s) = s \otimes \one_\aG$ is given by
	\begin{gather*}
		s := \begin{pmatrix}
			S_1^* , S_2^* , \ldots , S_d^*
		\end{pmatrix}^\top.
	\end{gather*}
\end{Example}

\section{Factor systems}\label{section4}

We have seen in Lemma~\ref{lem:freeness} that freeness of a compact ${\rm C}^*$-dynamical system $(\aA, \aG, \alpha)$ can be cast in form of a family of coisometries. These coisometries may be used to give a more explicit picture of the spectral subspaces of the ${\rm C}^*$-dynamical system. In fact, let $(\pi, V)$ be a~representation of~$\aG$ and let $s(\pi) \in \End(\hH,V) \tensor \aA$ be a coisometry with $\pi_{13} \id \tensor \alpha\bigl( s(\pi) \bigr) = s(\pi) \otimes \one_\aG$ in $\End(\hH,V) \tensor \aA \tensor \aG$. Then a few moments thought show that the multiplicity space $\Gamma(V) \subseteq V \tensor \aA$ is the range of the element $s(\pi)$, i.e., we have
\begin{gather*} 	
	\Gamma(V) = s(\pi) \big(\hH \tensor \aA^\aG\big).
\end{gather*}
The explicit form allows us to phrase the correspondence structure and the multiplicative structure among the generalized isotypic components only in terms of the f\/ixed point algebra $\aA^G$ and the quantum group $\aG$. This fact was already exploited in the previous part of this series~\cite{SchWa16}, where we carried out the analysis in the case of cleft dynamical systems with a classical compact group. With some adjustments we generalize the construction here to arbitrary free ${\rm C}^*$-dynamical systems and quantum groups.

We start with a free compact ${\rm C}^*$-dynamical system $(\aA, \aG, \alpha)$ and we write brief\/ly $\aB := \aA^\aG$ for the corresponding f\/ixed point algebra. Furthermore, we choose a functorial version of the f\/inite-dimensional Hilbert spaces $\hH_\pi$ and the coisometries $s(\pi)$ for each representation $\pi$ of $\aG$ (see also the discussion after Lemma~\ref{lem:freeness}). In particular, we assume without loss of generality that $\hH_1 = \C$ and $s(1) = \one_\aB$. Then we consider for each representation $\pi$ of~$\aG$ the $^*$-homomorphism
\begin{gather*}
	\gamma_\pi\colon \aB \to \End\bigl(\hH_\pi \bigr) \tensor \aB,
	\qquad
	\gamma_\pi(b) := s(\pi)^* (\one_{V_\pi} \tensor b) s(\pi)
\end{gather*}
and for each pair $\pi, \rho$ of representations of $\aG$ the element
\begin{gather*}
	\omega(\pi, \rho) := s(\pi \tensor \rho)^* s(\pi) s(\rho) \in \End(\hH_\pi \tensor \hH_\rho, \hH_{\pi \tensor \rho}) \tensor \aB,
\end{gather*}
where $s(\pi)$ and $s(\rho)$ are amplif\/ied to act trivially on $\hH_\rho$ and $\hH_\pi$, respectively.

\begin{defn}
	Let $(\aA, \aG, \alpha)$ be a free compact ${\rm C}^*$-dynamical system. Then the system $(\hH, \gamma, \omega) = \bigl( \hH_\pi, \gamma_\pi, \omega(\pi, \rho) \bigr)_{\pi, \rho \in \hat \aG}$ constructed above is called a \emph{factor system} of $(\aA, \aG, \alpha)$.
\end{defn}

\begin{Remark} 	\label{rmk:factor_sys_basis}
	For some computations it is convenient to express the factor system in terms of f\/ixed orthonormal bases of the Hilbert spaces $\hH_\pi$, $\pi \in \hat\aG$. In this situation we denote by $s(\pi)_1, \dots, s(\pi)_n \in \Gamma(V)$ the columns of $s(\pi)$. Then the $^*$-homomorphism $\gamma_\pi\colon \aB \to M_n \tensor \aB$ has the coef\/f\/icients
	\begin{gather*}
		\gamma_\pi(b)_{i,j} = \rprod{\aB}{s(\pi)_i, b \acts s(\pi)_j}
	\end{gather*}
	for all $1 \le i \le \dim \hH_\pi$ and $1 \le j \le \dim \hH_\rho$.
	For the partial isometry $\omega(\pi,\rho)$ we f\/irst f\/ix an irreducible subrepresentation $\sigma$ of $\pi \tensor \rho$. Then the coef\/f\/icients on the corresponding subspace $\hH_\sigma\subseteq \hH_\pi \tensor \hH_\rho$ are given by
	\begin{gather*}
		\omega(\pi, \rho)_{(i,j), k} = \rprod{\aB}{m(s(\pi)_i \tensor s(\rho)_j, s(\sigma)_k}
	\end{gather*}
	for all $1 \le i \le \dim \hH_\pi$, $1 \le j \le \dim \hH_\rho$, and $1 \le k \le \dim \hH_\sigma$.
\end{Remark}

Of course, dif\/ferent choices of Hilbert spaces $\hH_\pi$ and coisometries $s(\pi)$ give rise to dif\/ferent factor systems. However, as the following lemma shows, those choices only ef\/fect the factor system by a conjugacy with partial isometries:

\begin{Lemma}[cf.\ Lemma~5.5 and Theorem~5.6 in \cite{SchWa16}]	\label{lem:factor_sys}
	For a factor system $(\hH, \gamma, \omega)$ of a free compact ${\rm C}^*$-dynamical system $(\aA, \aG, \alpha)$ with fixed point algebra $\aB$ the following assertions hold:
	\begin{enumerate}\itemsep=0pt
	\item[$1.$] We have $\omega(1,1) = \one_\aB$, $\gamma_1 = \id_\aB$ and
		\begin{gather}		\label{eq:ranges_sys}
			\omega(\pi, \rho) \omega(\pi, \rho)^*= \gamma_{\pi \tensor \rho}(\one), \qquad
			\omega(\pi, \rho)^* \omega(\pi, \rho)		= \id \tensor \gamma_\rho \bigl( \gamma_\pi(\one) \bigr),\\
			\label{eq:coaction_sys}	\gamma_{\pi \tensor \rho}(b) \omega(\pi, \rho)=	\omega(\pi, \rho) \gamma_\rho \bigl( \gamma_\pi(b) \bigr),\\
			\label{eq:cocycle_sys}	\omega(\pi, \rho \tensor \sigma) \bigl( \one \tensor \omega(\rho, \sigma) \bigr)=\omega(\pi \tensor \rho, \sigma) \id \tensor \gamma_\sigma \bigl( \omega(\pi, \rho) \bigr)
		\end{gather}
		for all representations $\pi, \rho$ of $\aG$ and $b \in \aB$. We refer to the equation~\eqref{eq:coaction_sys} as the coaction condition and to equation~\eqref{eq:cocycle_sys} as the cocycle condition.
	\item[$2.$] Let $(\hH', \gamma', \omega')$ be another factor system of $(\aA, \aG, \alpha)$. Then there is a family of partial iso\-met\-ries $v(\pi) \in \End(\hH_\pi', \hH_\pi) \tensor \aB$, $\pi \in \hat \aG$, such that
		\begin{gather}	\label{eq:conj1}v(\pi)  v(\pi)^* = \gamma_\pi(\one), \qquad		v(\pi)^* v(\pi) =\gamma_\pi'(\one),	\\
			\label{eq:conj2}v(\pi)  \gamma_\pi'(b)=		\gamma_\pi(b) \, v(\pi),\\
			\label{eq:conj3}v(\pi \tensor \rho) \omega'(\pi, \rho)=	\omega(\pi, \rho) \id \tensor \gamma_\rho \bigl( v(\pi) \bigr) \bigl( \one \tensor v(\rho) \bigr)
		\end{gather}
		hold for all $\pi, \rho \in \hat \aG$ and $b \in \aB$.
	\item[$3.$] Conversely, let $v(\pi) \in \End(\hH_\pi', \hH_\pi) \tensor \aB$, $\pi \in \hat \aG$, be a family of partial isometries for finite-dimensional Hilbert spaces $\hH_\pi'$ such that $v(\pi) v(\pi)^* = \gamma_\pi(\one)$ holds for each $\pi \in \hat \aG$. Then the following system $(\hH', \gamma', \omega')$ is a factor system of $(\aA, \aG, \alpha)$:
		\begin{gather*}
			\gamma_\pi'(b) := v(\pi)^* \gamma_\pi(b)  v(\pi),\\
			\omega'(\pi, \rho):=v(\pi \tensor \rho)^* \; \omega(\pi, \rho) \id \tensor \gamma_\rho \bigl( v(\pi) \bigr) \bigl( \one \tensor v(\rho) \bigr)
		\end{gather*}
		for all $\pi, \rho \in \hat \aG$ and $b \in \aB$.
	\end{enumerate}
\end{Lemma}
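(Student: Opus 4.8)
The plan is to push every assertion back to the defining coisometries. I fix families $s(\pi)\in\End(\hH_\pi,V_\pi)\tensor\aA$ and $s'(\pi)\in\End(\hH_\pi',V_\pi)\tensor\aA$ of coisometries, chosen functorially and normalised by $s(1)=\one_\aB=s'(1)$, underlying the two factor systems, so that $\gamma_\pi(b)=s(\pi)^*(\one\tensor b)s(\pi)$ and $\omega(\pi,\rho)=s(\pi\tensor\rho)^*s(\pi)s(\rho)$, and likewise with primes. Everything will follow from just two properties of the $s(\pi)$: the coisometry relation $s(\pi)s(\pi)^*=\one$ and the invariance relation $\pi_{13}\,\id\tensor\alpha(s(\pi))=s(\pi)\tensor\one_\aG$. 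From invariance one first checks that $(\one\tensor b)s(\pi)$ for $b\in\aB$, the products $s(\pi)s(\rho)$, and $s(\pi)^*s'(\pi)$ again satisfy the $\pi$- (resp.\ $\pi\tensor\rho$-) invariance relation, hence that $\gamma_\pi(\aB)$, $\omega(\pi,\rho)$ and $s(\pi)^*s'(\pi)$ all have entries in $\aB$ --- this is precisely what makes the $\gamma_\rho$'s occurring in the statement applicable. The two computational workhorses are: $(\mathrm{i})$ $(\one\tensor b)s(\pi)=s(\pi)\gamma_\pi(b)$ for $b\in\aB$, immediate from $s(\pi)s(\pi)^*=\one$; and $(\mathrm{ii})$ for any element $X$ with entries in $\aB$ acting on legs disjoint from $\hH_\rho$ and $V_\rho$, the absorption identity $Xs(\rho)=s(\rho)\,\id\tensor\gamma_\rho(X)$, obtained from $(\mathrm{i})$ by writing $X$ as a sum of simple tensors.

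For part~1, the normalisations $\gamma_1=\id_\aB$ and $\omega(1,1)=\one_\aB$ are read off from $\hH_1=\C$ and $s(1)=\one_\aB$. The range identities~\eqref{eq:ranges_sys} follow by cancelling $s(\rho)s(\rho)^*$ and then $s(\pi)s(\pi)^*$ in $\omega(\pi,\rho)\omega(\pi,\rho)^*$, reaching $s(\pi\tensor\rho)^*s(\pi\tensor\rho)=\gamma_{\pi\tensor\rho}(\one)$, and by cancelling $s(\pi\tensor\rho)s(\pi\tensor\rho)^*$ in $\omega(\pi,\rho)^*\omega(\pi,\rho)$, reaching $s(\rho)^*\gamma_\pi(\one)s(\rho)$, which is just $\gamma_\rho$ applied leg-wise to $\gamma_\pi(\one)$. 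The coaction condition~\eqref{eq:coaction_sys} amounts to $(\one\tensor b)s(\pi)s(\rho)=s(\pi)s(\rho)\,\id\tensor\gamma_\rho(\gamma_\pi(b))$ --- one use of $(\mathrm{i})$ for $s(\pi)$ followed by $(\mathrm{ii})$ for $s(\rho)$ --- multiplied on the left by $s(\pi\tensor\rho)^*$ and using $s(\pi\tensor\rho)s(\pi\tensor\rho)^*=\one$. The cocycle condition~\eqref{eq:cocycle_sys} is the most delicate: on the left one cancels $s(\rho\tensor\sigma)s(\rho\tensor\sigma)^*=\one$ to arrive at $s(\pi\tensor\rho\tensor\sigma)^*s(\pi)s(\rho)s(\sigma)$; on the right one applies $(\mathrm{ii})$ with $X=\omega(\pi,\rho)$ to turn $s(\sigma)\,\id\tensor\gamma_\sigma(\omega(\pi,\rho))$ into $\omega(\pi,\rho)s(\sigma)$ and then cancels $s(\pi\tensor\rho)s(\pi\tensor\rho)^*=\one$, reaching the same expression, where one silently identifies $(\pi\tensor\rho)\tensor\sigma$ with $\pi\tensor(\rho\tensor\sigma)$ by functoriality.

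For part~2 I put $v(\pi):=s(\pi)^*s'(\pi)\in\End(\hH_\pi',\hH_\pi)\tensor\aB$. Then $v(\pi)v(\pi)^*=s(\pi)^*s(\pi)=\gamma_\pi(\one)$ and $v(\pi)^*v(\pi)=s'(\pi)^*s'(\pi)=\gamma_\pi'(\one)$, by cancelling $s'(\pi)s'(\pi)^*$ resp.\ $s(\pi)s(\pi)^*$; these are projections, so $v(\pi)$ is a partial isometry and~\eqref{eq:conj1} holds. Equation~\eqref{eq:conj2} is $v(\pi)\gamma_\pi'(b)=s(\pi)^*(\one\tensor b)s'(\pi)=\gamma_\pi(b)v(\pi)$ by the same cancellations, and~\eqref{eq:conj3} follows by expanding $s'(\pi\tensor\rho)^*s'(\pi)s'(\rho)$ on the left (cancel $s'(\pi\tensor\rho)s'(\pi\tensor\rho)^*$) and, on the right, using $(\mathrm{ii})$ to pull $\id\tensor\gamma_\rho(v(\pi))$ through $s(\rho)$ and then cancelling $s(\pi)s(\pi)^*s'(\pi)=s'(\pi)$ and $s(\rho)v(\rho)=s(\rho)s(\rho)^*s'(\rho)=s'(\rho)$, so that both sides equal $s(\pi\tensor\rho)^*s'(\pi)s'(\rho)$. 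For part~3, conversely, I put $s'(\pi):=s(\pi)v(\pi)$ (normalising $\hH_1'=\C$, $v(1)=\one_\aB$). Then $s'(\pi)s'(\pi)^*=s(\pi)\gamma_\pi(\one)s(\pi)^*=s(\pi)s(\pi)^*s(\pi)s(\pi)^*=\one$, so $s'(\pi)$ is a coisometry, and $\pi_{13}\,\id\tensor\alpha(s'(\pi))=(s(\pi)\tensor\one_\aG)(v(\pi)\tensor\one_\aG)=s'(\pi)\tensor\one_\aG$ since $v(\pi)$ has entries in $\aB$; hence $(\hH',s')$ is an admissible choice of coisometries, and reading off the factor system it defines gives exactly $\gamma_\pi'(b)=v(\pi)^*\gamma_\pi(b)v(\pi)$ and, via $(\mathrm{ii})$ once more (moving $v(\pi)$ past $s(\rho)$), $\omega'(\pi,\rho)=v(\pi\tensor\rho)^*\,\omega(\pi,\rho)\,\id\tensor\gamma_\rho(v(\pi))(\one\tensor v(\rho))$, as claimed.

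The real difficulty is bookkeeping, not insight: keeping the amplifications, leg numberings, and the (noncommutative) order of multiplication straight in the three-variable identities~\eqref{eq:cocycle_sys} and~\eqref{eq:conj3}, and --- the one place where the invariance of the coisometries, rather than merely their being coisometries, is indispensable --- making sure that every argument to which a homomorphism $\gamma_\rho$ is applied genuinely lies in $\End(\cdot)\tensor\aB$. With the two absorption identities and this $\aB$-valuedness in hand, each assertion collapses to cancelling a factor of the form $s(\cdot)s(\cdot)^*=\one$. This parallels Lemma~5.5 and Theorem~5.6 of~\cite{SchWa16}, with the leg numbering of the quantum group taking the place of the classical-group manipulations there.
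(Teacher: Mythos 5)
Your proposal is correct and follows essentially the same route as the paper: everything is reduced to the coisometry relation $s(\pi)s(\pi)^*=\one$ and the defining formulas, with $v(\pi):=s(\pi)^*s'(\pi)$ in part~2, so your absorption identities (i) and (ii) are just a systematized version of the paper's direct cancellation computations. If anything, you are slightly more complete than the paper: you explicitly verify the $\aB$-valuedness of $\gamma_\pi(b)$, $\omega(\pi,\rho)$ and $s(\pi)^*s'(\pi)$ via the invariance relation, and you spell out part~3 (through $s'(\pi):=s(\pi)v(\pi)$), which the paper's proof leaves to the reader.
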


\begin{proof}For sake of a concise notation we amplify all elements to a common domain specif\/ied by the context. Let $s(\pi) \in \End(\hH_\pi, V_\pi) \tensor \aA$, $\pi \in \hat \aG$, be the coisometries with $\pi \alpha \bigl( s(\pi) \bigr) = s(\pi) \tensor \one_\aG$ that generate the factor system $(\hH, \gamma,\omega)$.

1.~Let $\pi$, $\rho$ be representations of $\hat \aG$. Using the coisometry property of $s(\pi)$, $s(\rho)$, and $s(\pi \tensor \rho)$ we obtain for the range and cokernel projection of $\omega(\pi, \rho)$
		\begin{gather*}
			\omega(\pi, \rho) \omega(\pi, \rho)^*= s(\pi \tensor \rho)^* s(\pi) s(\rho) s(\rho)^* s(\pi)^* s(\pi \tensor \rho)\\
\hphantom{\omega(\pi, \rho) \omega(\pi, \rho)^*}{} = s(\pi \tensor \rho)^* s(\pi \tensor \rho)	= \gamma_{\pi \tensor \rho}(\one),	\\
			\omega(\pi, \rho)^*  \omega(\pi, \rho)= s(\rho)^* s(\pi)^* s(\pi \tensor \rho) s(\pi \tensor \rho)^* s(\pi) s(\rho)	\\
\hphantom{\omega(\pi, \rho)^*  \omega(\pi, \rho)}{} = s(\rho)^* s(\pi)^* s(\pi) s(\rho)	= \gamma_\rho( \gamma_\pi(\one)).
		\end{gather*}	
To show the other two asserted equations we compute the left and right hand side individually using the coisometry property and compare for all $b \in \aA^\aG$:
		\begin{gather*}
			\omega(\pi, \rho) \gamma_\rho \bigl( \gamma_\pi(b) \bigr)= s(\pi \tensor \rho)^* s(\pi) s(\rho) s(\rho)^* s(\pi)^*  b s(\pi) s(\rho)\\
\hphantom{\omega(\pi, \rho) \gamma_\rho \bigl( \gamma_\pi(b) \bigr)}{} = s(\pi \tensor \rho)^* b s(\pi) s(\rho),\\
			\gamma_{\pi \tensor \rho}(b) \omega(\pi, \rho)= s(\pi \tensor \rho)^*  b s(\pi \tensor \rho) s(\pi \tensor \rho)^* s(\pi) s(\rho)	\\
\hphantom{\gamma_{\pi \tensor \rho}(b) \omega(\pi, \rho)}{} = s(\pi \tensor \rho)^* b s(\pi) s(\rho),\\
\omega(\pi, \rho \tensor \sigma) \omega(\rho, \sigma)= s(\pi \tensor \rho \tensor \sigma)^* s(\pi) s(\rho \tensor \sigma) s(\rho \tensor \sigma)^* s(\rho) s(\sigma)	\\
\hphantom{\omega(\pi, \rho \tensor \sigma) \omega(\rho, \sigma)}{} = s(\pi \tensor \rho \tensor \sigma)^* s(\pi) s(\rho) s(\sigma),	\\
\omega(\pi \tensor \rho, \sigma) \gamma_\sigma \bigl( \omega(\pi, \rho) \bigr)= s(\pi \tensor \rho \tensor \sigma)^* s(\pi \tensor \rho) s(\sigma) s(\sigma)^* s(\pi \tensor \rho)^* s(\pi) s(\rho) s(\sigma)	\\
\hphantom{\omega(\pi \tensor \rho, \sigma) \gamma_\sigma \bigl( \omega(\pi, \rho) \bigr)}{} = s(\pi \tensor \rho \tensor \sigma)^* s(\pi) s(\rho) s(\sigma).
		\end{gather*}

2.~Let $s'(\pi) \in \End(\hH'_\pi, V'_\pi) \tensor \aA$, $\pi \in \hat \aG$, be the coisometries with $\pi \alpha\bigl(s'(\pi) \bigr) = s'(\pi) \tensor \one_\aG$ that generate the factor system $(\hH', \gamma', \omega')$. Then the coisometry property implies that for each $\pi \in \hat G$ the element $v(\pi) := s(\pi)^* \cdot s'(\pi)$ is a partial isometry satisfying $v(\pi) v(\pi)^* = s(\pi)^* s(\pi) = \gamma_\pi(\one)$ and $v(\pi)^* v(\pi) = s'(\pi)^* s'(\pi) = \gamma_\pi'(\one)$. Similarly the asserted relation of the $^*$-homomor\-phisms~$\gamma_\pi$ and $\gamma_\pi'$ and of the elements $\omega(\pi, \rho)$ and $\omega'(\pi, \rho)$ immediately follow from the coisometry property.
\end{proof}

Next, we explain how the correspondence structure of the isotypic components of a free compact ${\rm C}^*$-dynamical system can be expressed only in terms of quantities of an associated factor system. For this purpose, let $(\hH, \gamma, \omega)$ be a factor system of a free compact ${\rm C}^*$-dynamical system $(\aA, \aG, \alpha)$ with f\/ixed point algebra $\aB$. Then, for a representation $(\pi,V)$ of $\aG$, the left and right action of $\aB$ and the inner product on $\Gamma(V)$ are given by
\begin{gather*}
	b \acts (s(\pi) x) = s(\pi) \gamma_\pi (b) x,\\
	(s(\pi)  x)\acts b = s(\pi)  x  b,\\
	\scal{s(\pi)  x,s(\pi) y}_\aB = \rprod{\aB}{x,  \gamma_\pi(\one_\aB) y}
\end{gather*}
for all $b \in \aB$ and $x,y \in \hH_\pi \tensor \aB$. Moreover, for two representation $(\pi,V)$ and $(\rho,W)$ of~$\aG$ the multiplication map $m_{\pi, \rho}\colon  \Gamma(V) \tensor_\aB \Gamma(W) \to \Gamma(V \tensor W)$ can be written as
\begin{gather*}	
	m_{\pi, \rho} \bigl( s(\pi) x \tensor s(\rho) y \bigr)= s(\pi \tensor \rho)  \omega(\pi,\rho)  \gamma_\rho(x)  y
\end{gather*}
for all $x \in \hH_\pi \tensor \aB$ and $y \in \hH_\rho \tensor \aB$, where $\gamma_\rho(x) y$ is given by the linear extension of
\begin{gather*} \gamma_\rho(v \tensor b_1) (w \tensor b_2) = v \tensor \bigl( \gamma_\rho(b_1)(w \tensor b_2) \bigr)
\end{gather*}
for all $v \in \hH_\pi$, $w \in \hH_\rho$, and $b_1, b_2 \in \aB$. As a consequence, up to equivalence, the ${\rm C}^*$-dynamical system is uniquely determined by its factor system and vice versa. More precisely, we say that two factor systems $(\hH, \gamma, \omega)$ and $(\hH', \gamma', \omega')$ are \emph{conjugated} if there is a family of partial isometries $v(\pi) \in \End(\hH_\pi', \hH_\pi) \tensor \aB$, $\pi \in \hat \aG$, satisfying the equations~\eqref{eq:conj1}, \eqref{eq:conj2}, and \eqref{eq:conj3} for all $\pi, \rho, \sigma \in \hat\aG$ and $b \in \aB$. Then we have the following 1-to-1 correspondence:

\begin{Theorem} \label{thm:equivalence}	Let $(\aA, \aG, \alpha)$ and $(\aA', \aG, \alpha')$ be free compact ${\rm C}^*$-dynamical systems with the same fixed point algebra $\aB$ and let $(\hH, \gamma, \omega)$ and $(\hH', \gamma', \omega')$ be associated factor systems, respectively.
	Then the following statements are equivalent:
	\begin{enumerate}\itemsep=0pt
	\item[$(a)$]	
		The ${\rm C}^*$-dynamical systems $(\aA, \aG, \alpha)$ and $(\aA', \aG, \alpha')$ are equivalent.
	\item[$(b)$]	
		The factor systems $(\hH, \gamma, \omega)$ and $(\hH', \gamma', \omega')$ are conjugated.
	\end{enumerate}
\end{Theorem}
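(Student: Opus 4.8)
The plan is to prove the two implications separately; $(a)\Rightarrow(b)$ is a short transport argument, while $(b)\Rightarrow(a)$ is the substantive reconstruction.

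For $(a)\Rightarrow(b)$, let $\Phi\colon\aA\to\aA'$ be an equivariant $^*$-isomorphism restricting to the identity on $\aB$, and let $s'(\pi)\in\End(\hH'_\pi,V_\pi)\tensor\aA'$, $\pi\in\hat\aG$, be the coisometries generating $(\hH',\gamma',\omega')$. I would first check that the pulled-back elements $\tilde s(\pi):=(\id\tensor\Phi^{-1})\bigl(s'(\pi)\bigr)\in\End(\hH'_\pi,V_\pi)\tensor\aA$ are again coisometries with $\pi_{13}\,\id\tensor\alpha\bigl(\tilde s(\pi)\bigr)=\tilde s(\pi)\tensor\one_\aG$: this follows from $\alpha\circ\Phi^{-1}=(\Phi^{-1}\tensor\id)\circ\alpha'$ together with $\tilde s(\pi)\tilde s(\pi)^*=(\id\tensor\Phi^{-1})(\one)=\one$. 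Hence the $\tilde s(\pi)$ generate a factor system of $(\aA,\aG,\alpha)$, and because $\Phi^{-1}$ is the identity on $\aB$ (and $\gamma',\omega'$ take values in $\aB$) that factor system is, on irreducibles and hence everywhere, exactly $(\hH',\gamma',\omega')$. Thus $(\hH,\gamma,\omega)$ and $(\hH',\gamma',\omega')$ are two factor systems of the single system $(\aA,\aG,\alpha)$, and Lemma~\ref{lem:factor_sys}(2) yields partial isometries $v(\pi)$ satisfying \eqref{eq:conj1}--\eqref{eq:conj3}; that is, the factor systems are conjugated.

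For $(b)\Rightarrow(a)$, fix generating coisometries $s(\pi)$, $s'(\pi)$ and conjugating partial isometries $v(\pi)$ as in \eqref{eq:conj1}--\eqref{eq:conj3}. Recalling that $s(\pi)$ is a partial isometry with initial projection $\gamma_\pi(\one)$ and that $\Gamma(V_\pi)=s(\pi)(\hH_\pi\tensor\aB)$ (and likewise with primes), I would define for each irreducible $\pi$
\begin{gather*}
\Theta_\pi\colon\Gamma'(V_\pi)\to\Gamma(V_\pi),\qquad
\Theta_\pi\bigl(s'(\pi)\,x\bigr):=s(\pi)\,v(\pi)\,x,\qquad x\in\hH_\pi\tensor\aB.
\end{gather*}
Well-definedness holds since $v(\pi)$ annihilates $\ker s'(\pi)=\im\bigl(\one-\gamma'_\pi(\one)\bigr)$ by \eqref{eq:conj1}, and using \eqref{eq:conj1}, \eqref{eq:conj2} together with the displayed formulas for the $\aB$-actions and the inner product on $\Gamma$ one checks that $\Theta_\pi$ is an isomorphism of correspondences over $\aB$ with inverse $s(\pi)w\mapsto s'(\pi)v(\pi)^*w$; one then extends $\pi\mapsto\Theta_\pi$ to all representations so that it is natural in intertwiners. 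The heart of the matter is the multiplicativity $m_{\pi,\rho}\circ(\Theta_\pi\tensor_\aB\Theta_\rho)=\Theta_{\pi\tensor\rho}\circ m'_{\pi,\rho}$: substituting the formula $m_{\pi,\rho}(s(\pi)x\tensor s(\rho)y)=s(\pi\tensor\rho)\,\omega(\pi,\rho)\,\gamma_\rho(x)y$ on both sides and cancelling the injective part of $s(\pi\tensor\rho)$, this reduces precisely to the cocycle condition \eqref{eq:conj3}, the coaction relation \eqref{eq:conj2} being used to commute $\gamma_\rho$ past $v(\pi)$. Granting this, $(\Theta_\pi)_\pi$ is an isomorphism of the associated weak tensor functors, so by the reconstruction of \cite[Section~2]{Ne13} --- together with the identification $A(\pi)\cong V_\pi\tensor\Gamma(\bar V_\pi)$ of isotypic components and the density of $\sum_{\pi}A(\pi)$ --- it induces a $^*$-isomorphism on the dense $^*$-subalgebras which extends to a $C^*$-isomorphism $\Phi\colon\aA'\to\aA$. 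This $\Phi$ preserves the isotypic grading, hence is equivariant; and since \eqref{eq:conj1} and \eqref{eq:conj3} at $\pi=\rho=1$ (with $\hH_1=\C$, $s(1)=s'(1)=\one_\aB$, $\gamma_1=\id_\aB$) force $v(1)=\one_\aB$, we get $\Theta_1=\id_\aB$ and therefore $\Phi|_\aB=\id_\aB$. So $(\aA,\aG,\alpha)$ and $(\aA',\aG,\alpha')$ are equivalent.

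I expect the main obstacle to be the verification of multiplicativity of the $\Theta_\pi$ and its reduction to \eqref{eq:conj3}: this is a matter of careful leg-numbering bookkeeping --- tracking how $\gamma_\rho$ is applied to the several tensor factors and the order in which $\omega(\pi,\rho)$ and $v(\pi\tensor\rho)$ act --- and it is intertwined with checking naturality of $\pi\mapsto\Theta_\pi$ in intertwiners, which is what lets the argument pass from irreducibles to arbitrary representations and hence to the dense $^*$-subalgebra of $\aA$. Everything else (well-definedness of $\Theta_\pi$, that it is a correspondence isomorphism, equivariance, triviality on $\aB$, and the pull-back computation in $(a)\Rightarrow(b)$) is a routine unwinding of the displayed formulas and of Lemma~\ref{lem:factor_sys}.
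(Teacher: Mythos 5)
Your proposal is correct and follows essentially the same route as the paper: $(a)\Rightarrow(b)$ by transporting the coisometries through the equivariant isomorphism fixing $\aB$ and invoking Lemma~\ref{lem:factor_sys}(2), and $(b)\Rightarrow(a)$ by the correspondence isomorphisms $s'(\pi)x\mapsto s(\pi)v(\pi)x$, which intertwine the multiplication maps via \eqref{eq:conj2}--\eqref{eq:conj3} and yield the equivalence through the reconstruction of Lemma~\ref{lem:unitary tensor functor} and \cite[Section~2]{Ne13}. Your write-up merely supplies details the paper leaves implicit (well-definedness, $v(1)=\one_\aB$, triviality on $\aB$).
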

\begin{proof}
	As a distinction we add a prime to all notions referring to $(\aA', \aG, \alpha')$.

1.~To prove that (a) implies (b) it suf\/f\/ices to show that for the same ${\rm C}^*$-dynamical system dif\/ferent choices of coisometries lead to conjugated factor systems. This is exactly the second statement of Lemma~\ref{lem:factor_sys}.

2.~For the converse implication let $s(\pi) \in \End(\hH_\pi, V_\pi) \tensor \aA$, $\pi \in \hat \aG$, be the coisometries with $\pi_{13} \id \tensor \alpha \bigl( s(\pi) \bigr) = s(\pi) \tensor \one_\aG$ that generate the factor system $(\hH, \gamma,\omega)$, and likewise $s'(\pi) \in \End(\hH'_\pi, V_\pi) \tensor \aA$ for $(\hH', \gamma', \omega')$. Furthermore, let $v(\pi)$, $\pi \in \hat \aG$, be the partial isometries which realize the conjugation of the factor systems. Then a few moments thought show that, due to equations~\eqref{eq:conj1} and \eqref{eq:conj2}, for every representation $(\pi,V)$ of~$\aG$ the map
		\begin{gather*}
			\phi_\pi\colon \ \Gamma'(V) \to \Gamma(V), 	\qquad 	s'(\pi)  x \mapsto s(\pi) v(\pi) x
		\end{gather*}
for all $x \in \hH'_\pi \tensor \aB$ is a well-def\/ined isomorphism of correspondences of $\aB$. Moreover, by equation~\eqref{eq:conj3}, these isomorphisms intertwine the multiplication maps, i.e., we have
\begin{gather*}
m_{\pi, \rho} \bigl( \phi_\pi(x) \tensor \phi_\rho(y) \bigr) = \phi_{\pi \tensor \rho} \bigl( m_{\pi, \rho}'(x \tensor y) \bigr)
\end{gather*}
for all representations $(\pi,V), (\rho,W) \in \hat\aG$ and all elements $x \in \Gamma'(V)$ and $y \in \Gamma'(W)$. Since $(\aA, \aG, \alpha)$ can be reconstructed from the correspondences $\Gamma(V)$ and the multiplicative structure between them (cf.\ Lemma~\ref{lem:unitary tensor functor} or \cite[Section~2]{Ne13}), and likewise for $(\aA', \aG, \alpha')$ with $\Gamma'(V)$, it is now easily checked that the maps $\phi_\pi$, $\pi \in \hat \aG$, give rise to an equivalence between $(\aA, \aG, \alpha)$ and $(\aA', \aG, \alpha')$ (cf.\ also \cite[Theorem~5]{SchWa16}).
\end{proof}

A particular simple class of free actions are so-called cleft actions (see~\cite{SchWa16}). Regarded as noncommutative principal bundles, these actions are characterized by the fact that all associated noncommutative vector bundles are trivial. For convenience of the reader we recall the def\/inition.

\begin{defn} \label{def:cleft}
	A compact ${\rm C}^*$-dynamical system $(\aA, \aG, \alpha)$ is called \emph{cleft} if for each irreducible representation $(\pi,V)$ of $\aG$ the so-called generalized isotypic component
	\begin{gather*}
		A_2(\pi) := \{ x \in \End(V) \tensor \aA \,|\, \pi_{13} \id \tensor \alpha(x) = x \tensor \one_\aG \} \subseteq \End(V) \tensor \aA
	\end{gather*}
	contains a unitary element. It directly follows from Lemma~\ref{lem:freeness} that cleft ${\rm C}^*$-dynamical systems are free.
\end{defn}

\begin{Example}	Given a unital ${\rm C}^*$-algebra $\aB$ and a compact quantum group $\aG$, the most basic example of a cleft action is given by the ${\rm C}^*$-dynamical system $\bigl( \aB \tensor \aG, \aG, \id \tensor \Delta)$. In fact, for any irreducible representation $(\pi, V)$ of $\aG$ the unitary $U:=\pi^*_{13} \in \End(V) \tensor \aB \tensor \aG$ satisf\/ies $\pi_{13} \, \id \tensor \Delta(U) = U \otimes \one_\aG$.
\end{Example}

\begin{Example}	For $\aG = \SU_q(2)$ the only cleft and ergodic action is the canonical action of $\SU_q(2)$ on itself (see \cite[Corollary~5.9]{BiRiVa06}). For $q=1$ this already follows from the seminal work of Wassermann~\cite{Wass88b}.
\end{Example}

\begin{Example}[cf.\ Example~\ref{expl:free and ergodic}]	For an arbitrary compact quantum group, the authors of~\cite{BiRiVa06} provide a classif\/ication of unitary f\/iber functors which preserve the dimension in terms of unitary 2-cocycles on the dual quantum group. It is not hard to see that the corresponding actions are cleft.
\end{Example}

\begin{Example}It can be shown that the free ${\rm C}^*$-dynamical system $\bigl( \alg A(\mathbb S_{\theta'}^7), \SU(2), \alpha \bigr)$ discussed in Example~\ref{expl:Connes-Landi} is not cleft (cf.\ \cite[Proposition~9]{LaSu05}).
\end{Example}

We continue with a characterization of cleft actions in terms of their factor systems. For this we recall that two projections $p \in \End(V) \tensor \aB$ and $q \in \End(W) \tensor \aB$ with f\/inite-dimensional Hilbert spaces $V,W$ are called \emph{Murray--von~Neumann equivalent} over $\aB$ if there is a partial isometry $v \in \End(V,W) \tensor \aB$ satisfying $p = v^* v$ and $q = vv^*$.

\begin{Lemma}	\label{lem:cleft}	Let $(\aA, \aG, \alpha)$ be a free compact ${\rm C}^*$-dynamical system with fixed point algebra~$\aB$. Then the following statements are equivalent:
	\begin{enumerate}\itemsep=0pt
	\item[$(a)$] The system $(\aA, \aG,\alpha)$ is cleft.
	\item[$(b)$] For some and hence for every factor system $(\hH, \gamma, \omega)$ and every $(\pi,V) \in \hat \aG$ the projection $\gamma_\pi(\one_\aB)$ is Murray--von~Neumann equivalent to $\one_V \tensor \one_\aB$ over $\aB$.
	\end{enumerate}
\end{Lemma}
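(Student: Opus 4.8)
The plan is to convert, for each irreducible $(\pi,V)$, a unitary element of $A_2(\pi)$ into a partial isometry witnessing the Murray--von~Neumann equivalence $\gamma_\pi(\one_\aB)\sim\one_V\tensor\one_\aB$ over $\aB$, and conversely. The starting point is the identity $\gamma_\pi(\one_\aB)=s(\pi)^*(\one_{V_\pi}\tensor\one_\aB)s(\pi)=s(\pi)^*s(\pi)$, valid for any coisometry $s(\pi)\in\End(\hH_\pi,V_\pi)\tensor\aA$ generating a factor system; thus $\gamma_\pi(\one_\aB)\in\End(\hH_\pi)\tensor\aB$ is exactly the cokernel projection of $s(\pi)$. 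Before the main argument I would settle the ``some versus every'' clause: by part~2 of Lemma~\ref{lem:factor_sys} any two factor systems $(\hH,\gamma,\omega)$ and $(\hH',\gamma',\omega')$ are linked by partial isometries $v(\pi)\in\End(\hH_\pi',\hH_\pi)\tensor\aB$ with $v(\pi)v(\pi)^*=\gamma_\pi(\one_\aB)$ and $v(\pi)^*v(\pi)=\gamma_\pi'(\one_\aB)$; composing such a $v(\pi)$ with a partial isometry implementing $\gamma_\pi'(\one_\aB)\sim\one_V\tensor\one_\aB$ transports the equivalence to $\gamma_\pi(\one_\aB)$, so the property in (b) does not depend on the chosen factor system, and it suffices to prove (a)$\Leftrightarrow$(b) for one convenient choice.

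For (a)$\Rightarrow$(b): suppose the system is cleft and pick, for each irreducible $(\pi,V)$, a unitary $u_\pi\in A_2(\pi)$. Being unitary, $u_\pi$ is in particular a coisometry, and the defining relation of $A_2(\pi)$ reads $\pi_{13}\,\id\tensor\alpha(u_\pi)=u_\pi\tensor\one_\aG$, so the choice $\hH_\pi:=V_\pi$, $s(\pi):=u_\pi$ is admissible data in the sense of Lemma~\ref{lem:freeness}; extending it to all representations by the functorial procedure described in the discussion following Lemma~\ref{lem:freeness} yields a factor system. For this factor system $\gamma_\pi(\one_\aB)=u_\pi^*(\one_{V_\pi}\tensor\one_\aB)u_\pi=u_\pi^*u_\pi=\one_{V_\pi}\tensor\one_\aB$ for every irreducible $\pi$, which is trivially Murray--von~Neumann equivalent to $\one_V\tensor\one_\aB$. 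By the preliminary remark, (b) then holds for \emph{every} factor system.

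For (b)$\Rightarrow$(a): fix any factor system generated by coisometries $s(\pi)$ and, for each irreducible $(\pi,V)$, choose a partial isometry $v(\pi)\in\End(V,\hH_\pi)\tensor\aB$ with $v(\pi)^*v(\pi)=\one_V\tensor\one_\aB$ and $v(\pi)v(\pi)^*=\gamma_\pi(\one_\aB)$ (this is the equivalence of (b) read with the two projections interchanged, which is permitted since Murray--von~Neumann equivalence is symmetric). Put $u_\pi:=s(\pi)v(\pi)\in\End(V_\pi)\tensor\aA$. Using $s(\pi)s(\pi)^*=\one$, $\gamma_\pi(\one_\aB)=s(\pi)^*s(\pi)=v(\pi)v(\pi)^*$, and $v(\pi)^*v(\pi)=\one_V\tensor\one_\aB$, one computes $u_\pi u_\pi^*=s(\pi)v(\pi)v(\pi)^*s(\pi)^*=s(\pi)s(\pi)^*s(\pi)s(\pi)^*=\one$ and $u_\pi^*u_\pi=v(\pi)^*s(\pi)^*s(\pi)v(\pi)=v(\pi)^*v(\pi)v(\pi)^*v(\pi)=\one$, so $u_\pi$ is unitary. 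Since the entries of $v(\pi)$ lie in $\aB=\aA^\aG$ we have $\id\tensor\alpha(v(\pi))=v(\pi)\tensor\one_\aG$, and together with $\pi_{13}\,\id\tensor\alpha(s(\pi))=s(\pi)\tensor\one_\aG$ this gives $\pi_{13}\,\id\tensor\alpha(u_\pi)=(s(\pi)\tensor\one_\aG)(v(\pi)\tensor\one_\aG)=u_\pi\tensor\one_\aG$, i.e.\ $u_\pi\in A_2(\pi)$. Hence $A_2(\pi)$ contains a unitary for every irreducible $\pi$, which is exactly cleftness.

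I do not expect a genuine obstacle; the only delicate points are the leg-numbering bookkeeping in $\End(V)\tensor\aA\tensor\aG$ and — in both directions — the fact that the partial isometries $v(\pi)$ must have coefficients in the fixed point algebra $\aB$ (so that $\alpha$ acts trivially on them), which is precisely why the relevant notion in (b) is Murray--von~Neumann equivalence \emph{over} $\aB$ and not over $\aA$: over $\aA$ the projection $\gamma_\pi(\one_\aB)$ is always equivalent to $\one_V\tensor\one_\aA$ via $s(\pi)^*$ itself, so the analogous statement would be vacuous.
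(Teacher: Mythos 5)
Your proposal is correct and follows essentially the same route as the paper: the direction (a)$\Rightarrow$(b) builds a factor system from the cleft unitaries so that $\gamma_\pi(\one_\aB)=\one_{V_\pi}\tensor\one_\aB$, and the independence of the choice of factor system is exactly part~2 of Lemma~\ref{lem:factor_sys}. For (b)$\Rightarrow$(a) your explicit unitary $u_\pi=s(\pi)v(\pi)$ is precisely what the paper obtains by conjugating the factor system with the family $v(\pi)$ (part~3 of Lemma~\ref{lem:factor_sys}) and then reading off that the new generating coisometries are unitary, so the two arguments coincide up to packaging.
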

\begin{proof}1.~If $(\aA, \aG, \alpha)$ is cleft, each $A_2(\pi)$, $\pi \in \hat \aG$, contains a unitary element $s(\pi)$. A~factor system $(\hH, \gamma, \omega)$ is then given by $\hH_\pi = V_\pi$ and $\gamma_\pi(x) = s(\pi)^* (x \tensor \one) s(\pi)$ for all $\pi \in \hat \aG$. In particular, we have $\gamma_\pi(\one_\aB) = s(\pi)^* s(\pi) = \one_V \tensor \one_\aB$. By Lemma~\ref{lem:factor_sys} every other factor system $(\aA, \aG, \alpha)$ dif\/fers only by partial isometries in a respective amplif\/ication and therefore satisf\/ies the same relation.

2.~Conversely, suppose that $(\hH, \gamma, \omega)$ is a factor system of $(\aA, \aG, \alpha)$ such that for every \smash{$(\pi, V) \in \hat G$} the projections $\gamma_\pi(\one_\aB)$ and $\one_V \tensor \one_\aB$ are Murray--von~Neumann equivalent. That is, we may f\/ind partial isometries $v(\pi) \in \End(V_\pi, \hH_\pi)$, $\pi \in \hat \aG$, such that $\gamma_\pi(\one_\aB) = v(\pi) v(\pi)^*$ and $\one_\pi \tensor \one_\aB = v(\pi)^* v(\pi)$. By conjugating the factor system with this family of partial isometries we may assume that $\hH_\pi = V_\pi$ and $\gamma_\pi(\one_\aB) = \one_\pi \tensor \one_\aB$. Moreover, for the factor system we may pick a~family of coisometries $s(\pi) \in A_2(\pi)$, $\pi \in \hat \aG$, with $\gamma_\pi(b) = s(\pi)^*(\one_\pi \tensor b) s(\pi)$ for all $b \in \aB$. Then we have $\one_\pi \tensor \one_\aB = \gamma_\pi(\one_\aB) = s(\pi)^* s(\pi)$, that is, $s(\pi)$ is unitary.
\end{proof}

\begin{Example}Suppose we are in the context of Example~\ref{ex:Cuntz} with $\aG=\SU_q(2)$ and the natural representation $(\pi,\mathbb{C}^2)$ of $\SU_q(2)$. Furthermore, let $\aB$ be the f\/ixed point algebra of the induced free compact ${\rm C}^*$-dynamical system $(\mathcal{O}_2,\SU_q(2),\alpha)$. Then a few moments thought show that the $^*$-homomorphism $\gamma_{\pi} \colon  \aB \rightarrow \aB$ induced by the coisometry $s=(S_1^*,S_2^*)^\top$ satisf\/ies $\gamma_{\pi}(\one_\aB)=\one_\aB$. Since \cite[Proposition~6.10]{Gabriel2014} implies that $K_0(\aB)$ can be identif\/ied with the integers in such a way that $[\one_\aB]=1$ (see also \cite{GabWeb16,KoYuNaMaWaYa92,Marc98}), it follows from Lemma~\ref{lem:cleft} that $(\mathcal{O}_2,\SU_q(2),\alpha)$ is not cleft.
\end{Example}

\section{Construction of free actions}\label{section5}

In the previous section we have seen that a free compact ${\rm C}^*$-dynamical system is uniquely determined by its factor system $(\hH, \gamma, \omega)$ and under which equivalence relation this becomes 1-to-1 correspondence (Theorem~\ref{thm:equivalence}). In this section we will show that in fact every factor system $(\hH, \gamma, \omega)$ satisfying the algebraic relations of Lemma~\ref{lem:factor_sys} gives rise to a free compact ${\rm C}^*$-dynamical system. The construction is based on the fact, that the factor system $(\hH, \gamma, \omega)$ allows us to completely reconstruct the correspondence structure of the multiplicity spaces $\Gamma(V)$ and their multiplicative structure, i.e., the factor system provides a unitary tensor functor $V \mapsto \Gamma(V)$ and hence a compact ${\rm C}^*$-dynamical system (see~\cite{CoYa13b,Ne13}). We recall the major steps in order to show that this construction yields a free compact ${\rm C}^*$-dynamical system with factor system $(\hH, \gamma, \omega)$.

Throughout the following let $\aB$ be a f\/ixed unital ${\rm C}^*$-algebra and let $\aG$ be a f\/ixed reduced compact quantum group. Furthermore, let $(\hH, \gamma, \omega) = \bigl( \hH_\pi, \gamma_\pi, \omega(\pi,\rho) \bigr)_{\pi, \rho\in \hat \aG}$ be a family of f\/inite-dimensional Hilbert spaces $\hH_\pi$, $^*$-homomorphisms $\gamma_\pi\colon \aB \to \End(\hH_\pi) \tensor\aB$, and partial isometries $\omega(\pi,\rho) \in \End(\hH_\pi \tensor \hH_\rho, \hH_{\pi\tensor\rho}) \tensor \aB$. By taking direct sums of irreducible representations, we def\/ine $\hH_\pi$, $\gamma_\pi$ and $\omega(\pi, \rho)$ for arbitrary representations $\pi, \rho$ of~$\aG$. In particular, for each intertwiner $T\colon V_\pi \to V_\rho$ we have a linear map $\hH(T)\colon \hH_\pi \to \hH_\rho$.

\begin{defn} \label{defn:factor_sys} A system $(\hH, \gamma, \omega)$ as described above is called a \emph{factor system} for the pair $(\aB,\aG)$ if it satisf\/ies equations~\eqref{eq:ranges_sys}, \eqref{eq:coaction_sys}, \eqref{eq:cocycle_sys} for all $\pi, \rho \in \hat \aG$ and $b \in \aB$, and if the normalization conditions $\hH_1 = \C$, $\gamma_1 = \id_\aB$, $\omega(1,1) = \one_\aB$ holds.
\end{defn}

From now on we suppose that $(\hH, \gamma, \omega)$ is a factor system. Then, for each representation $(\pi, V)$ of $\aG$, we consider the vector space
\begin{gather} 	\label{eq:Gamma}
	\Gamma(V) := \gamma_\pi(\one) (\hH_\pi \tensor \aB).
\end{gather}
A few moments thought show that this space caries a natural right Hilbert $\aB$-module structure given by restricting the action $(v_1 \tensor b_1)\acts b_2 := v_1 \tensor b_1 b_2$ and the inner product $\rprod{\aB}{v_1 \tensor b_1,v_2 \tensor b_2} := \scal{v_1,v_2} b_1^* b_2$ for $v_1,v_2 \in \hH_\pi$ and $b_1,b_2 \in \aB$. Moreover, we equip $\Gamma(V)$ with the left action $b \acts x := \gamma_\pi(b) x$ for $b \in \aB$ and $x \in \Gamma(V)$. Then it is easily checked that $\Gamma(V)$ is a correspondence over~$\aB$ and that $V \mapsto \Gamma(V)$ becomes an additive functor from the representation category of $\aG$ into the category of ${\rm C}^*$-correspondences over $\aB$.

For each pair $(\pi,V), (\rho,W)$ of representation of $\aG$ we def\/ine a linear map
\begin{align}
		m_{\pi,\rho}\colon \ & \Gamma(V) \tensor_\aB \Gamma(W) \to \Gamma(V \tensor W),	\nonumber\\
		& m_{\pi, \rho}(x \tensor y) := \omega(\pi, \rho)  \gamma_\rho(x) y,	\label{eq:multipl}
	\end{align}
where for elementary tensors we write brief\/ly $\gamma_\rho(v \tensor b_1) (w \tensor b_2) := v \tensor \gamma_\rho(b_1)(w \tensor b_2)$ for all $v \in \hH_\pi$, $w \in \hH_\rho$, and $b_1, b_2 \in \aB$. It is easily checked that the maps $m_{\pi, \rho}$ are well-def\/ined and behave naturally with respect to intertwiners. In fact, we are going to show that $V \mapsto \Gamma(V)$ together with the maps $m_{\pi, \rho}$ forms a \emph{unitary tensor functor} in the sense of \cite[Def\/inition~2.1]{Ne13}. For convenience of the reader we recall the def\/inition in the current context:

\begin{defn}A linear functor $V \mapsto \Gamma(V)$ from the representation category of $\aG$ into the category of ${\rm C}^*$-correspondences over $\aB$ together with a~$\aB$-bilinear family of unitary maps
\begin{gather*}
m_{\pi, \rho}\colon \ \Gamma(V) \tensor_\aB \Gamma(W) \to \Gamma(V \tensor W)
\end{gather*}
for all representations $(\pi,V)$, $(\rho,W)$ of $\aG$ is called a \emph{unitary tensor functor} if the following conditions hold:
	\begin{enumerate}\itemsep=0pt
	\item For the trivial representation $(1,\C) \in \hat G$ we have $\Gamma(\C) = \aB$ and for all $(\pi,V) \in \hat G$ we have $m_{\pi,1}(x \tensor b) = x \acts b$ and $m_{1, \pi}(b \tensor x) = b \acts x$ for all $x \in \Gamma(V)$, $b \in \aB$.
	\item For every intertwiner $T\colon V \to W$ we have $\Gamma(T^*) = \Gamma(T)^*$.
	\item The maps $m$ are associative in the sense that for all $\pi, \rho, \sigma \in \hat G$ we have
		\begin{gather*}
			m_{\pi, \rho \tensor \sigma} \circ (\id \tensor m_{\rho, \sigma})= m_{\pi \tensor \rho, \sigma} \circ (m_{\pi, \rho} \tensor\id).
		\end{gather*}
\end{enumerate}
\end{defn}

\begin{Lemma} \label{lem:unitary tensor functor} The functor $V \mapsto \Gamma(V)$ and the maps $m_{\pi, \rho}\colon \Gamma(V)\tensor_\aB \Gamma(W) \to \Gamma(V \tensor W)$ given by the equations~\eqref{eq:Gamma} and~\eqref{eq:multipl}, respectively, for $(\pi,V),(\rho,W) \in \hat\aG$ constitute a unitary tensor functor.
\end{Lemma}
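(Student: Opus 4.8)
The plan is to verify the three defining properties of a unitary tensor functor in turn, drawing throughout on the structural relations \eqref{eq:ranges_sys}, \eqref{eq:coaction_sys} and \eqref{eq:cocycle_sys} and the normalization $\hH_1 = \C$, $\gamma_1 = \id_\aB$, $\omega(1,1) = \one_\aB$. The bookkeeping that $\Gamma(\cdot)$ is an additive functor and that each $m_{\pi,\rho}$ descends to the balanced tensor product has already been recorded above; what is left from the definition of a unitary tensor functor is, besides the three axioms, the $\aB$-bilinearity of $m_{\pi,\rho}$. Right $\aB$-linearity is immediate from \eqref{eq:multipl}, and left $\aB$-linearity is exactly the coaction condition: for $b \in \aB$ and $x \in \Gamma(V)$ one has, using that $\gamma_\rho$ is a $^*$-homomorphism, $m_{\pi,\rho}(b\acts x \tensor y) = \omega(\pi,\rho)\,\gamma_\rho\bigl(\gamma_\pi(b)\bigr)\,\gamma_\rho(x)\,y = \gamma_{\pi\tensor\rho}(b)\,\omega(\pi,\rho)\,\gamma_\rho(x)\,y = b \acts m_{\pi,\rho}(x\tensor y)$. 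The property $\Gamma(T)^* = \Gamma(T^*)$ for intertwiners follows from naturality of $\gamma$: writing $\Gamma(T)$ as the restriction of $\hH(T)\tensor\id_\aB$ postcomposed with $\gamma_\rho(\one)$, the identity $(\hH(T^*)\tensor\id)\,\gamma_\rho(\one) = \gamma_\pi(\one)\,(\hH(T^*)\tensor\id)$, valid since $T^*$ intertwines, shows the two maps agree on $\Gamma(W)$.

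For the normalization axiom I would first establish $\omega(\pi,1) = \gamma_\pi(\one) = \omega(1,\pi)$ for every representation $\pi$. Specializing \eqref{eq:cocycle_sys} to $\rho = \sigma = 1$ and using $\hH_1 = \C$, $\gamma_1 = \id_\aB$, $\omega(1,1) = \one_\aB$ yields $\omega(\pi,1)^2 = \omega(\pi,1)$, while \eqref{eq:ranges_sys} shows that the range projection and the source projection of the partial isometry $\omega(\pi,1) \in \End(\hH_\pi)\tensor\aB$ both equal $\gamma_\pi(\one)$; an idempotent partial isometry whose range and source projection coincide is that projection, so $\omega(\pi,1) = \gamma_\pi(\one)$, and symmetrically (specializing to $\pi = \rho = 1$) $\omega(1,\pi) = \gamma_\pi(\one)$. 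Hence $\Gamma(\C) = \gamma_1(\one)(\C\tensor\aB) = \aB$ with its standard right Hilbert module structure, and for $x \in \Gamma(V)$, $b \in \aB$ one gets $m_{\pi,1}(x\tensor b) = \gamma_\pi(\one)\,x\,b = x\,b = x\acts b$ and $m_{1,\pi}(b\tensor x) = \gamma_\pi(\one)\,\gamma_\pi(b)\,x = \gamma_\pi(b)\,x = b\acts x$, using $\gamma_\pi(\one)x = x$.

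The associativity axiom is the computational core, and the step I expect to cost the most care. Fixing $x \in \Gamma(V_\pi)$, $y \in \Gamma(V_\rho)$, $z \in \Gamma(V_\sigma)$ and expanding both composites by \eqref{eq:multipl}, the side $m_{\pi\tensor\rho,\sigma}\circ(m_{\pi,\rho}\tensor\id)$ produces the term $\omega(\pi\tensor\rho,\sigma)\,\id\tensor\gamma_\sigma\bigl(\omega(\pi,\rho)\bigr)$, which by the cocycle condition \eqref{eq:cocycle_sys} equals $\omega(\pi,\rho\tensor\sigma)\bigl(\one\tensor\omega(\rho,\sigma)\bigr)$; after distributing $\gamma_\sigma$ (a $^*$-homomorphism) over the remaining products, what is left to match is precisely the coaction identity $\bigl(\one\tensor\omega(\rho,\sigma)\bigr)\,\gamma_\sigma\bigl(\gamma_\rho(x)\bigr) = \gamma_{\rho\tensor\sigma}(x)\,\bigl(\one\tensor\omega(\rho,\sigma)\bigr)$, i.e.\ \eqref{eq:coaction_sys} applied entrywise in the $\aB$-coefficients of $x$. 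Both composites then reduce to $\omega(\pi,\rho\tensor\sigma)\,\gamma_{\rho\tensor\sigma}(x)\,\omega(\rho,\sigma)\,\gamma_\sigma(y)\,z$. The genuine difficulty is not any single identity but keeping the leg-numbering over $\hH_\pi$, $\hH_\rho$, $\hH_\sigma$ and the copy of $\aB$ consistent throughout; once the conventions behind \eqref{eq:multipl} are fixed, this is a bounded, if tedious, diagram chase.

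It remains to see that each $m_{\pi,\rho}$ is unitary, which I would split into isometry and surjectivity. For isometry, by \eqref{eq:ranges_sys} one has $\omega(\pi,\rho)^*\omega(\pi,\rho) = \id\tensor\gamma_\rho\bigl(\gamma_\pi(\one)\bigr)$, and $\gamma_\rho(x)$ lies in the range of this projection whenever $x \in \Gamma(V)$, because $\gamma_\pi(\one)x = x$; hence $\rprod{\aB}{m_{\pi,\rho}(x\tensor y), m_{\pi,\rho}(x'\tensor y')} = \rprod{\aB}{\gamma_\rho(x)\,y,\ \gamma_\rho(x')\,y'}$, and a short computation using that $\gamma_\rho$ is a $^*$-homomorphism rewrites the right-hand side as $\rprod{\aB}{y,\ \gamma_\rho\bigl(\rprod{\aB}{x,x'}\bigr)\,y'}$, which is exactly the inner product of $x\tensor y$ and $x'\tensor y'$ in $\Gamma(V)\tensor_\aB\Gamma(W)$. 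For surjectivity, write $\gamma_\pi(\one) = \sum_i x_i x_i^*$ with $x_i := \gamma_\pi(\one)(e_i\tensor\one_\aB) \in \Gamma(V)$ for an orthonormal basis $(e_i)$ of $\hH_\pi$, a finite frame for the finitely generated projective module $\Gamma(V)$, so that $\omega(\pi,\rho)^*\omega(\pi,\rho) = \sum_i \gamma_\rho(x_i)\gamma_\rho(x_i)^*$. This makes the closed span of $\{\gamma_\rho(x)\,y : x \in \Gamma(V),\ y \in \Gamma(W)\}$ equal to the range of $\omega(\pi,\rho)^*\omega(\pi,\rho)$; applying $\omega(\pi,\rho)$ and invoking $\omega(\pi,\rho)\omega(\pi,\rho)^* = \gamma_{\pi\tensor\rho}(\one)$ from \eqref{eq:ranges_sys} shows that the image of $m_{\pi,\rho}$ is dense in $\Gamma(V\tensor W)$. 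Since $m_{\pi,\rho}$ is an isometry it has closed range, so it is onto, hence unitary. This establishes the three axioms and completes the verification.
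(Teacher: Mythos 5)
Your proposal is correct and takes essentially the same route as the paper: it verifies the unit law, compatibility with adjoints of intertwiners, associativity via the coaction and cocycle conditions, isometry of $m_{\pi,\rho}$ from $\omega(\pi,\rho)^*\omega(\pi,\rho)=\id\tensor\gamma_\rho\bigl(\gamma_\pi(\one)\bigr)$, and surjectivity from $\omega(\pi,\rho)\omega(\pi,\rho)^*=\gamma_{\pi\tensor\rho}(\one)$. The only (harmless) deviations are cosmetic: you pin down the unit law by deriving $\omega(\pi,1)=\gamma_\pi(\one)=\omega(1,\pi)$ from the cocycle condition, and you obtain surjectivity via a finite frame $x_i=\gamma_\pi(\one)(e_i\tensor\one_\aB)$ instead of the paper's direct choice of generators, which is, if anything, slightly more careful than the paper's wording.
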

\begin{proof}
1.~The normalization $\Gamma(\C) = \aB$ as correspondence immediately follows from $\hH_1 = \C$ and $\gamma_1 = \id_\aB$. Moreover, the normalization $\omega(1,1) = 1$ together with conditions~\eqref{eq:ranges_sys} and~\eqref{eq:coaction_sys} of the factor system imply $\omega(\pi,1) = \one_{\hH_\pi} = \omega(1,\pi)$ that for every \mbox{$(\pi,V) \in \hat \aG$}. Hence, we obtain $m_{\pi, 1}(x \tensor b) = \omega(\pi,1) \gamma_\aB(x) b = x \acts b$ and $m_{1,\pi}(b \tensor x) = \omega(1,\pi) \gamma_\pi(b) x = b\acts x$ for all $x \in \Gamma(V)$ and $b \in \aB$.

2.~For any intertwiner $T\colon V \to W$ it is easily checked that $\hH(T^*) = \hH(T)^*$ which in turn implies that $\Gamma(T) = \hH(T) \tensor \id_\aB|_{\Gamma(V)}$ is adjointable with $\Gamma(T) = \Gamma(T)^*$.

3.~Associativity is an immediate consequence of the coaction and cocycle condition of the factor system. More precisely for all representations $\pi,\rho,\sigma \in \hat \aG$ and elements $x \in \Gamma(V_\pi)$, $y \in \Gamma(V_\rho)$, $z \in \Gamma(V_\sigma)$ we have
		\begin{gather*}
m_{\pi, \rho \tensor \sigma}\bigl(x \tensor m_{\rho,\sigma}(y \tensor z) \bigr)	= \omega(\pi, \rho \tensor \sigma) \gamma_{\rho \tensor\sigma}(x) \bigl( \omega(\rho,\sigma) \gamma_\sigma(y) z \bigr)			\\
\qquad{} \overset{\eqref{eq:coaction_sys}}=	\omega(\pi,\rho \tensor \sigma) \omega(\rho,\sigma) \gamma_\sigma \bigl( \gamma_\rho(x) y \bigr) z \overset{\eqref{eq:cocycle_sys}}=\omega(\pi \tensor \rho,\sigma) \gamma_\sigma \bigl( \omega(\pi,\rho) \bigr) \gamma_\sigma \bigl( \gamma_\rho(x) y) z\\
\qquad{} = \omega(\pi \tensor \rho, \sigma) \gamma_\sigma \bigl( \omega(\pi,\rho)\gamma_\rho(x)y \bigr) z=m_{\pi \tensor \rho, \sigma} \bigl( m_{\pi,\rho}(x \tensor y) \tensor z \bigr).
		\end{gather*}

4.~It remains to show that the maps $m_{\pi,\rho}\colon \Gamma(V) \tensor_\aB \Gamma(W) \to \Gamma(V \tensor W)$ are unitary for all representations $(\pi,V)$, $(\rho,W)$ of~$\aG$. To see that $m_{\pi, \rho}$ is isometric we observe that by equation~\eqref{eq:ranges_sys} the projection $\omega(\pi, \rho)^* \omega(\pi,\rho) = \gamma_\rho \bigl( \gamma_\pi(\one) \bigr)$ is larger than than the subspace of $\hH_\pi \tensor \hH_\rho \tensor \aB$ generated by all $\gamma_\rho(x) y$ with $x \in \Gamma(V)$, $y \in \Gamma(W)$. Hence, we have
		\begin{gather*}
\rprod{\aB}{m_{\pi,\rho}(x_1 \tensor y_1), m_{\pi,\rho}(x_2 \tensor y_2)}= \rprod{\aB}{\gamma_\rho(x_1)y_1, \omega(\pi,\rho)^*\omega(\pi,\rho) \gamma_\rho(x_2) y_1}		\\
\qquad{} = \rprod{\aB}{\gamma_\rho(x_1) y_1, \gamma_\rho(x_2) y_1}	= \rprod{\aB}{x_1 \tensor y_1, x_2 \tensor y_2}
\end{gather*}
for all $x_1, x_2 \in \Gamma(V)$ and $y_1, y_2 \in \Gamma(W)$.	To show that $m_{\pi,\rho}$ is surjective, we notice that $\Gamma(V)$ is linearly generated by all elements of the form $\gamma_\pi(\one) (v \tensor b)$ with $v \in \hH_\pi$ and $b \in \aB$; and likewise for $\Gamma(W)$. By equation~\eqref{eq:ranges_sys} the projection $\one_{\hH_\pi} \tensor \gamma_\rho(\one_\aB)$ is larger than the cokernel projection $\omega(\pi,\rho)^* \omega(\pi,\rho) = \gamma_\rho \bigl( \gamma_\pi(\one)\bigr)$. Choosing the elements $x := v \tensor \one_\aB$ and $y := w \tensor b$, we therefore f\/ind that the range of $m_{\pi,\rho}$ contains all elements of the form
\begin{gather*}
\omega(\pi,\rho) \bigl( v \tensor \gamma_\rho(\one)(w \tensor b) \bigr)	= \omega(\pi,\rho) (v \tensor w \tensor b)
\end{gather*}
with $v \in \hH_\pi$, $w\in \hH_\rho$, $b \in \aB$. Hence, the image of $m_{\pi,\rho}$ contains the range of $\omega(\pi,\rho)$, which by equation~\eqref{eq:ranges_sys} is given by $\gamma_{\pi\tensor\rho}(\one) (\hH_{\pi \tensor\rho} \tensor \aB) = \Gamma(V \tensor W)$.
\end{proof}

Having the unitary tensor functor in hands, we may construct a ${\rm C}^*$-dynamical system as presented in \cite{CoYa13b,Ne13}. For convenience of the reader we brief\/ly summarize the main steps. We consider the algebraic direct sum
\begin{gather*}
	A := \bigoplus_{(\pi,V) \in \hat \aG} V \tensor \Gamma\big(\bar V\big).
\end{gather*}
We equip each summand of this space with its canonical $\aB$-valued inner product given by \linebreak $\rprod{\aB}{v \tensor x, w \tensor y} = \scal{v,w} \scal{x,y}_\aB$ for all $v,w \in V$ and $x,y \in \Gamma(\bar V)$, and we extend the re\-sul\-ting inner product sesquilinearly to~$A$. Moreover, we equip $A$ with the multiplication def\/ined, for $\bar v \tensor x \in \bar V \tensor \Gamma(V)$ and $\bar w \tensor y \in \bar W \tensor \Gamma(W)$ with $(\pi,V), (\rho,W) \in \hat \aG$, by the product
\begin{gather*}
	(v \tensor x) \bullet (w \tensor y) := \sum_{k=1}^N \bigl( S_k^* \tensor \Gamma\big(\bar S_k\big)^* \bigr) \bigl(v \tensor w \tensor m_{\bar \pi,\bar\rho}(x \tensor y) \bigr)\in \sum_{k=1}^N V_{\sigma_k} \tensor \Gamma\big(\bar V_{\sigma_k}\big),
\end{gather*}
where $S_1, \dots, S_N$ is a complete set of isometric intertwiners $S_k\colon V_{\sigma_k} \to V \tensor W$, $\sigma_k \in \hat \aG$, with respective conjugates $\bar S_k\colon \bar V_{\sigma_k} \to \bar V \tensor \bar W$. Extending this product bilinearly yields an associative multiplication on $A$. The algebra $\aB$ can be regarded as the subalgebra of~$A$ corresponding to the trivial representation, and the left and right module action of~$\aB$ coincides with the multiplication on~$A$.

The next step is to construct an involution on $A$. For this purpose we f\/irst recall that for an irreducible representation $(\pi,V)$ of $\hat G$ there is a pair of intertwiners $R\colon \C \to V \tensor \bar V$ and $\bar R\colon  \C \to \bar V \tensor V$ such that $(R^* \tensor \id_V) (\id_V \tensor \bar R) = \id_V$. With this we may def\/ine involutions $^+\colon  \Gamma(V) \to \Gamma(\bar V)$ and $^+\colon \bar V \to V$ by putting
\begin{gather*}
	x^+ := m[x]^* \bigl( \Gamma(R)(\one_\aB) \bigr),\qquad 	\bar v^+ := i[\bar v]^* \bar R(1)
\end{gather*}
where we brief\/ly write $m[x]\colon \Gamma(\bar V) \to \Gamma(V \tensor \bar V)$ for the map $m[x](y) := m_{\pi,\bar\pi}(x \tensor y)$ and $i[\bar v]\colon V \to V \tensor \bar V$ for the map $i[\bar v](w) := \bar v \tensor w$. Then for $\bar v \tensor x \in \bar V \tensor \Gamma(V) \subseteq A$ we may put $(\bar v \tensor x)^+ := \bar v^+ \tensor x^+$ and extend this anilinearly to a map on $A$. It can be shown that this involution turns $A$ into a $^*$-algebra (see \cite[Lemma~2.5]{Ne13}).

\begin{Remark}\sloppy Our conventions for the inner products and the involution slightly deviate from~\cite{Ne13}, but the reader may easily adapt the arguments of~\cite{Ne13} to our conventions.
\end{Remark}

Every summand $\bar V \tensor \Gamma(V)$ admits a unitary representation of $\aG$ by acting on the f\/irst tensor factor. Taking direct sums yields a map $\alpha\colon A \to A \tensor \aG$. This map is in fact a $^*$-homomorphism satisfying $(\alpha \tensor \id_\aG) \circ \alpha = (\alpha \tensor \Delta) \circ \alpha$ (see \cite[Lemma~2.6]{Ne13}). Altogether we have an algebraic action of the quantum group $\aG$ on the $^*$-algebra $\aA$. From this we may pass to a ${\rm C}^*$-dynamical system by taking the completion $\hH_A$ of $A$ with respect to the norm $\norm{x}_2 := \norm{\rprod{\aB}{x,x}}^{1/2}$. Then the left multiplication of $A$ yields a faithful representation $\lambda\colon A \to \End(\hH_A)$ and a ${\rm C}^*$-algebra $\aA := \overline{\lambda(A)}$. The $^*$-homomorphism $\alpha$ can be extended to an action $\alpha\colon \aA \to \aA \tensor \aG$, which we denote by the same letter. Since we started with a unitary tensor functor, the corresponding compact ${\rm C}^*$-dynamical system $(\aA, \aG, \alpha)$ is free. For details we refer the reader to \cite[Section~4]{CoYa13b}.

\begin{Lemma}The free compact ${\rm C}^*$-dynamical system $(\aA, \aG, \alpha)$ admits $(\hH, \gamma, \omega)$ as one of its factor systems.
\end{Lemma}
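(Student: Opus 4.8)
The plan is to verify that the construction of Section~\ref{section5}, when applied to the given factor system $(\hH,\gamma,\omega)$, produces a free ${\rm C}^*$-dynamical system whose associated factor system (for a suitable choice of coisometries $s(\pi)$) is precisely $(\hH,\gamma,\omega)$ again. First I would exhibit the canonical coisometries. For each irreducible $(\pi,V)\in\hat\aG$, recall that the underlying Hilbert space $V\tensor\aA$ of the ${\rm C}^*$-dynamical system decomposes with multiplicity space $\Gamma(\bar V)$, and that by the reconstruction (cf.\ Lemma~\ref{lem:unitary tensor functor}) the space $\Gamma(V)$ inside $V\tensor\aA$ is canonically identified with $\gamma_\pi(\one)(\hH_\pi\tensor\aB)$ via the isomorphism of correspondences from equation~\eqref{eq:Gamma}. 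The idea is to let $s(\pi)\in\End(\hH_\pi,V_\pi)\tensor\aA$ be the element whose columns, in a fixed orthonormal basis $e_1,\dots,e_n$ of $\hH_\pi$, are the images of $\gamma_\pi(\one)(e_k\tensor\one_\aB)\in\Gamma(V)$; equivalently, $s(\pi)$ is the partial isometry implementing the embedding $\Gamma(V)=s(\pi)(\hH_\pi\tensor\aB)$. One must then check that $\pi_{13}\,\id\tensor\alpha(s(\pi))=s(\pi)\tensor\one_\aG$, which is exactly the statement that the columns lie in $\Gamma(V)$, and that $s(\pi)s(\pi)^*=\one$, i.e.\ $s(\pi)$ is a coisometry, which follows since the left inner product on $\Gamma(V)$ is full — that is, from freeness of the reconstructed system, already established at the end of Section~\ref{section5}. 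For the normalization one checks $\hH_1=\C$, and $s(1)=\one_\aB$, matching Definition~\ref{defn:factor_sys}.

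Next I would compute $\gamma_\pi^{\mathrm{new}}(b):=s(\pi)^*(\one_{V_\pi}\tensor b)s(\pi)$ and $\omega^{\mathrm{new}}(\pi,\rho):=s(\pi\tensor\rho)^*s(\pi)s(\rho)$ and verify they coincide with the given $\gamma_\pi$ and $\omega(\pi,\rho)$. For $\gamma_\pi^{\mathrm{new}}$, this is a direct unwinding: the left action of $b\in\aB$ on $\Gamma(V)$ was defined in equation~\eqref{eq:Gamma} as $x\mapsto\gamma_\pi(b)x$, and under the identification $\Gamma(V)=s(\pi)(\hH_\pi\tensor\aB)$ the expression $s(\pi)^*(\one\tensor b)s(\pi)$ computes precisely the matrix of this left action in the chosen basis, which by Remark~\ref{rmk:factor_sys_basis} has coefficients $\rprod{\aB}{s(\pi)_i,b\acts s(\pi)_j}=\gamma_\pi(b)_{i,j}$. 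For $\omega^{\mathrm{new}}(\pi,\rho)$, one uses that the multiplication map $m_{\pi,\rho}$ on the reconstructed system was built from the $m_{\pi,\rho}$ of equation~\eqref{eq:multipl}; applying $m_{\pi,\rho}$ to $s(\pi)(e_i\tensor\one)\tensor s(\rho)(e_j\tensor\one)$ gives $s(\pi\tensor\rho)\,\omega(\pi,\rho)\,\gamma_\rho(e_i\tensor\one)(e_j\tensor\one)$ by the formula relating $m_{\pi,\rho}$ to the coisometries, so that $s(\pi\tensor\rho)^*s(\pi)s(\rho)$ recovers $\omega(\pi,\rho)$ exactly (its range and cokernel projections being $\gamma_{\pi\tensor\rho}(\one)$ and $\id\tensor\gamma_\rho(\gamma_\pi(\one))$ as in equation~\eqref{eq:ranges_sys}). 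A small point to handle: $\omega^{\mathrm{new}}(\pi,\rho)$ as defined a priori only refers to irreducible $\pi,\rho$ and the decomposition of $\pi\tensor\rho$ into irreducibles; one should check the identification is compatible with the isometric intertwiners $S_k$ used in Section~\ref{section5} to define the product $\bullet$, which is routine bookkeeping with the functoriality of $\Gamma$ and $\hH$.

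The main obstacle I anticipate is purely a matter of bookkeeping rather than a conceptual difficulty: one has to keep careful track of the two identifications in play — the isomorphism $\varphi_\pi\colon V\tensor\Gamma(\bar V)\to A(\pi)$ (with its $d_\pi^{-1/2}R^*$ normalization) from Section~\ref{section3}, and the concrete model $\Gamma(V)=\gamma_\pi(\one)(\hH_\pi\tensor\aB)$ of Section~\ref{section5} — and confirm that the coisometry $s(\pi)$ one writes down really satisfies $\Gamma(V)=s(\pi)(\hH_\pi\tensor\aB)$ with the correct inner product $\scal{s(\pi)x,s(\pi)y}_\aB=\rprod{\aB}{x,\gamma_\pi(\one)y}$. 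Once that identification is pinned down, the verifications of the $\gamma$ and $\omega$ formulas are forced. I would therefore organize the proof as: (i) define $s(\pi)$ and check it is a coisometry with $\pi_{13}\,\id\tensor\alpha(s(\pi))=s(\pi)\tensor\one_\aG$; (ii) check the normalization $s(1)=\one_\aB$; (iii) compute $\gamma_\pi^{\mathrm{new}}=\gamma_\pi$ from the description of the left $\aB$-action on $\Gamma(V)$; (iv) compute $\omega^{\mathrm{new}}(\pi,\rho)=\omega(\pi,\rho)$ from the formula for $m_{\pi,\rho}$ in terms of coisometries; and conclude that $(\hH,\gamma,\omega)$ is a factor system of $(\aA,\aG,\alpha)$, which together with Theorem~\ref{thm:equivalence} finishes the classification.
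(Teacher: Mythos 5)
Your proposal follows essentially the same route as the paper's proof: you identify the multiplicity space of the reconstructed system with $\gamma_\pi(\one)(\hH_\pi \tensor \aB)$, take the coisometry $s(\pi)$ whose columns are (the images of) $\gamma_\pi(\one)(f_k \tensor \one_\aB)$, and verify $\tilde\gamma_\pi = \gamma_\pi$ and $\tilde\omega = \omega$ by comparing coefficients as in Remark~\ref{rmk:factor_sys_basis} and via the formula expressing $m_{\pi,\rho}$ through the coisometries. Your explicit justification of the coisometry property via freeness of the reconstructed system is a correct filling-in of what the paper leaves as ``easily checked.''
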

\begin{proof}First note that for an irreducible representation $(\pi,V) \in \hat \aG$, the $\pi$-isotypic component of $(\aA, \aG, \alpha)$ is obviously given by $V \tensor \Gamma(\bar V)$. Hence the $\pi$-multiplicity space of the ${\rm C}^*$-dynamical system
\begin{gather*}
	\Gamma_\aA(V) := \{ x \in V \tensor \aA \,|\, \pi_{13} \id_V \tensor \alpha(x) = x \tensor \one_\aG \} \subseteq V \tensor \bar V \tensor \Gamma(V)
\end{gather*}
is isomorphic to $\Gamma(V)$ as a correspondence over $\aB$. More precisely, a few moments thought show that an isomorphism is given by $\varphi\colon \Gamma(V) \to \Gamma_\aA(V)$, $x \mapsto R(1) \tensor x$. Next, f\/ix an orthonormal basis $f_1, \dots, f_n$ of $\hH_\pi$ and consider the elements
	\begin{gather*}
		s_k := \gamma_\pi(\one) (f_k \tensor \one_\aB), \qquad 1 \le k\le n.
	\end{gather*}
Then it is easily checked that the collection of elements $s_k$ ($1 \le k\le n$) for each $\pi \in \hat \aG$ provide a factor system $(\tilde \hH, \tilde \gamma, \tilde \omega)$ of $(\aA, \aG, \alpha)$ with Hilbert spaces $\tilde \hH_\pi = \hH_\pi$. In terms of the chosen basis $f_1, \dots, f_n$, the $^*$-homomorphism $\tilde \gamma_\pi\colon  \aB \to M_n \tensor \aB$ for $\pi \in \hat \aG$ is given by
\begin{gather*}
	\tilde \gamma_\pi(b)_{i,j} = \rprod{\aB}{\varphi(s_i), b \acts \varphi(s_j)} = \rprod{\aB}{s_i, b \acts s_j}
	= \rprod{\aB}{f_i \tensor \one_\aB, \gamma_\pi(b) (f_j \tensor \one_\aB)}= \gamma_\pi(b)_{i,j}
\end{gather*}
for all $b \in \aB$ (see Remark~\ref{rmk:factor_sys_basis}). That is, we have $\tilde \gamma = \gamma$ and similar computation shows that $\tilde \omega = \omega$, too. Consequently, we f\/ind that $(\hH, \gamma, \omega)$ is indeed a factor system of the free compact ${\rm C}^*$-dynamical system $(\aA, \aG, \alpha)$.
\end{proof}

Summarizing the previous results, we have thus proved our main theorem:

\begin{Theorem} \label{thm:main_class_thm}	Let $\aB$ be a unital ${\rm C}^*$-algebra and let $\aB$ be a compact quantum group. Then there is a one-to-one correspondence between the set of equivalence classes of free ${\rm C}^*$-dynamical systems with fixed point algebra $\aB$ and compact quantum group $\aG$ and the set of conjugacy classes of factor systems for $(\aB,\aG)$.
\end{Theorem}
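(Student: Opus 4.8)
The plan is to read Theorem~\ref{thm:main_class_thm} as the statement that one explicit map is a bijection, and to obtain well-definedness, injectivity, and surjectivity from results already established. Fix the unital ${\rm C}^*$-algebra $\aB$ and the compact quantum group $\aG$. Given a free compact ${\rm C}^*$-dynamical system $(\aA, \aG, \alpha)$ with $\aA^\aG = \aB$, freeness together with Lemma~\ref{lem:freeness} and the remarks following it allows me to pick a functorial family of finite-dimensional Hilbert spaces $\hH_\pi$ and coisometries $s(\pi) \in \End(\hH_\pi, V_\pi) \tensor \aA$ with $\pi_{13}\,\id\tensor\alpha\bigl(s(\pi)\bigr) = s(\pi)\tensor\one_\aG$, normalized so that $\hH_1 = \C$ and $s(1) = \one_\aB$. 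By Lemma~\ref{lem:factor_sys}(1) the associated triple $(\hH, \gamma, \omega)$ satisfies equations~\eqref{eq:ranges_sys}, \eqref{eq:coaction_sys}, \eqref{eq:cocycle_sys} and the normalization conditions, so it is a factor system for $(\aB, \aG)$ in the sense of Definition~\ref{defn:factor_sys}. This defines a map $\Phi$ sending $(\aA, \aG, \alpha)$ to the conjugacy class of $(\hH, \gamma, \omega)$.

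Next I would check that $\Phi$ factors through equivalence classes. Independence of the chosen coisometries and functorial structure is precisely Lemma~\ref{lem:factor_sys}(2), which produces partial isometries $v(\pi)$ realizing a conjugacy between any two factor systems of the same system; in particular the conjugacy class is intrinsic. That equivalent systems yield the same conjugacy class is the implication $(a)\Rightarrow(b)$ of Theorem~\ref{thm:equivalence}. Hence $\Phi$ descends to a map $\bar\Phi$ from equivalence classes of free systems with fixed point algebra $\aB$ to conjugacy classes of factor systems for $(\aB, \aG)$. Injectivity of $\bar\Phi$ is then the implication $(b)\Rightarrow(a)$ of Theorem~\ref{thm:equivalence}: if representatives of the two systems have conjugated factor systems, the systems are equivalent.

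For surjectivity, let $(\hH, \gamma, \omega)$ be an arbitrary factor system for $(\aB, \aG)$. By Lemma~\ref{lem:unitary tensor functor} the functor $V \mapsto \Gamma(V)$ of~\eqref{eq:Gamma} together with the maps $m_{\pi,\rho}$ of~\eqref{eq:multipl} is a unitary tensor functor, so the reconstruction of \cite{CoYa13b,Ne13} recalled in this section produces a free compact ${\rm C}^*$-dynamical system $(\aA, \aG, \alpha)$ with $\aA^\aG = \aB$; by the last lemma above, $(\hH,\gamma,\omega)$ is one of its factor systems, so $\bar\Phi\bigl([(\aA, \aG, \alpha)]\bigr)$ equals its conjugacy class. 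Combining the three steps yields the claimed one-to-one correspondence.

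The step I expect to need the most care is bookkeeping rather than new mathematics: checking that a functorial choice of $\hH_\pi$ and $s(\pi)$ over all representations can be assembled from the irreducible ones (using additivity of $\Gamma$ and the estimates $n(\pi\oplus\rho)\le n(\pi)+n(\rho)$, $n(\pi\tensor\rho)\le n(\pi)\cdot n(\rho)$ noted after Lemma~\ref{lem:freeness}), that the normalization $\hH_1 = \C$, $s(1) = \one_\aB$ may be imposed without loss of generality, and that ``equivalence'' is read so as to fix $\aB$, so that Theorem~\ref{thm:equivalence} applies verbatim. If one instead allows equivalences that move the fixed point algebra by an automorphism of $\aB$, the same argument goes through after transporting factor systems along that automorphism, which acts compatibly on both sides of the correspondence.
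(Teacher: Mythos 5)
Your proposal is correct and follows essentially the same route as the paper: the paper proves the theorem precisely by summarizing Lemma~\ref{lem:factor_sys} and Theorem~\ref{thm:equivalence} (well-definedness and injectivity of the assignment of conjugacy classes of factor systems) together with the Section~\ref{section5} reconstruction via the unitary tensor functor of Lemma~\ref{lem:unitary tensor functor} and the final lemma identifying $(\hH,\gamma,\omega)$ as a factor system of the constructed system (surjectivity). Your additional bookkeeping remarks on functorial choices and the normalization $\hH_1=\C$, $s(1)=\one_\aB$ match the conventions the paper already imposes.
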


\section{Coverings of the noncommutative 2-torus}\label{section6}

Given a unital ${\rm C}^*$-algebra $\aB$, we call a free compact ${\rm C}^*$-dynamical system $(\aA,\aG,\alpha)$ with a~f\/i\-ni\-te quantum group $\aG$ and f\/ixed point algebra $\aB$ a \emph{finite covering} of $\aB$. The main purpose of this section is to use factor systems to show that f\/inite coverings of generic irrational rotation ${\rm C}^*$-al\-gebras are cleft (cf.\ Def\/inition~\ref{def:cleft}).

\begin{Lemma}	\label{lem:pos_homom}
	Let $\theta \in \R$. Then every positive group homomorphism of $\Z + \theta \Z$ is a multiple of the identity.
\end{Lemma}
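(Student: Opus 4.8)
The plan is to regard $\Z + \theta\Z$ as an ordered subgroup of $\R$ and to show directly that any positive homomorphism $\phi\colon \Z+\theta\Z \to \Z+\theta\Z$ coincides with $\lambda\,\id$ for $\lambda := \phi(1)$. First I would record two elementary observations: $\lambda \ge 0$ (since $1 \ge 0$ and $\phi$ is positive), and $\phi$ is monotone, i.e.\ $x \le y$ in $\Z+\theta\Z$ implies $\phi(x) = \phi(y) - \phi(y-x) \le \phi(y)$ because $\phi(y-x) \ge 0$; additivity also gives $\phi(n) = n\lambda$ for all $n \in \Z$.

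The core is a squeezing argument with rational approximations. Fix $x \in \Z+\theta\Z$. For every integer $p \ge 1$ and every $q \in \Z$ with $q \le px$, the elements $q$ and $px$ both lie in $\Z+\theta\Z$, so monotonicity together with $\phi(q) = q\lambda$ and $\phi(px) = p\,\phi(x)$ yields $q\lambda \le p\,\phi(x)$, that is $\lambda\,(q/p) \le \phi(x)$; symmetrically, $q \ge px$ gives $\lambda\,(q/p) \ge \phi(x)$. Since $\lambda \ge 0$ and the rationals $q/p$ with $q/p \le x$ have supremum $x$ (and those with $q/p \ge x$ have infimum $x$), letting $q/p \to x$ from below and from above gives $\lambda x \le \phi(x)$ and $\lambda x \ge \phi(x)$, hence $\phi(x) = \lambda x$. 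Thus $\phi = \lambda\,\id$ with $\lambda \ge 0$, which settles the statement for every $\theta$, rational or irrational, with no case distinction.

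I do not expect a genuine obstacle here; the only point requiring a little care is that the approximating rationals $q/p$ need not themselves lie in $\Z + \theta\Z$, but this is harmless since $\phi$ is only ever evaluated at the integer $q$ and at $px \in \Z+\theta\Z$. (A more computational alternative for irrational $\theta$ would identify $\Z+\theta\Z$ with $\Z^2$, write $\phi$ as an integer matrix, and observe that positivity forces it to map the half-plane $\{x+y\theta \ge 0\}$ into itself, hence to be scalar; but that route needs a separate treatment of the rational case and of quadratic-irrational $\theta$, so the squeezing argument is preferable.)
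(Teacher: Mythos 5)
Your argument is correct and is essentially the same as the paper's: both are rational-approximation squeeze arguments exploiting positivity (equivalently monotonicity) of the homomorphism together with density of $\mathbb{Q}$ in $\R$. The only cosmetic difference is that the paper squeezes only at the generator, proving $h(\theta)=h(1)\theta$ and then extending by additivity to all of $\Z+\theta\Z$, whereas you squeeze $\phi(x)$ directly at every element $x$; this changes nothing of substance (and your version likewise yields the integrality observation of Remark~\ref{rem:integers}).
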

\begin{proof}
	Let $h\colon \Z + \theta \Z \to \Z + \theta \Z$ be a positive group homomorphism. Then for all $x,y \in \Z$ we have that $x + \theta y\ge 0$ implies $h(1) x + h(\theta) y \ge 0$ and $x + \theta y \le 0$ implies $h(1) x + h(\theta) y \le 0$. Considering $q := -x/y$, it follows that for all $q \in \mathbb Q$ we have that $q \ge \theta$ implies $h(1) q \ge h(\theta)$ and $q \le \theta$ implies $h(1) q \le h(\theta)$. Taking the limit $q \to \theta$ in rationals, we may conclude that $h(1) \theta = h(\theta)$. Finally, for every $z = x + \theta y \in \Z + \theta \Z$ we obtain $h(z) = x h(1) + y h(\theta) = h(1) z$ as asserted.
\end{proof}

\begin{Remark}\label{rem:integers}Extending the preceding proof, the equation $h(1) \theta = h(\theta)$ is a quadratic equation with integer coef\/f\/icients. Thence for non-quadratic $\theta$ the factor $h(1)$ must be a positive integer.
\end{Remark}

Given a f\/inite group $G$ and its representation ring $R(G)$, it is a well-known fact that there is only one ring homomorphism $r\colon R(G) \to \R$ with $r(\pi) > 0$ for every $\pi \in \hat G$, namely $r(\pi) = \dim \pi$ for every $\pi \in \hat G$. The next result shows that this statement remains true in the context of f\/inite quantum groups.

\begin{Lemma}	\label{lem:charac_R(G)}	Let $\aG$ be a finite quantum group and denote by $R(\aG)$ its representation ring. Then there is only one ring homomorphism $r\colon R(\aG) \to \R$ with $r(\pi) > 0$ for every $\pi \in \hat \aG$, namely $r(\pi) = \dim \pi$ for every $\pi \in \hat \aG$.
\end{Lemma}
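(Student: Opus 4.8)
The plan is to put the fusion‑ring structure of $R(\aG)$ to work together with the Perron--Frobenius theorem; here finiteness of $\aG$ is essential, since it guarantees that $\hat\aG$ is a finite set and that $R(\aG)$ is free abelian of finite rank with basis $\hat\aG$. Existence is immediate: the classical dimension $\pi\mapsto\dim\pi$ is additive under direct sums and multiplicative under tensor products, hence extends to a ring homomorphism $R(\aG)\to\R$ that is strictly positive on $\hat\aG$. So the real content is uniqueness, and that is where I would spend the effort.

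For uniqueness, fix a ring homomorphism $r\colon R(\aG)\to\R$ with $r(\pi)>0$ for all $\pi\in\hat\aG$ and record it as the entrywise positive vector $d=(r(\rho))_{\rho\in\hat\aG}$. For each $\pi\in\hat\aG$ let $M_\pi$ be the $\hat\aG\times\hat\aG$ matrix with nonnegative integer entries $(M_\pi)_{\sigma,\rho}=\dim\Hom(\sigma,\pi\otimes\rho)$, which encodes left multiplication by $\pi$ on $R(\aG)$. Frobenius reciprocity gives $\dim\Hom(\sigma,\pi\otimes\rho)=\dim\Hom(\bar\pi\otimes\sigma,\rho)$, and combining this with multiplicativity of $r$ one computes $M_\pi d=r(\bar\pi)\,d$; thus $d$ is a common positive eigenvector of all the $M_\pi$. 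Applying the same identity to the ring homomorphism $\dim$ shows that the dimension vector $q=(\dim\rho)_{\rho\in\hat\aG}$ is likewise a common positive eigenvector of the $M_\pi$.

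The key point is then that $M:=\sum_{\pi\in\hat\aG}M_\pi$ is a strictly positive matrix. Indeed its $(\tau,\sigma)$-entry is $\sum_{\pi}\dim\Hom(\tau,\pi\otimes\sigma)=\sum_{\pi}\dim\Hom(\tau\otimes\bar\sigma,\pi)$, which counts the irreducible summands of $\tau\otimes\bar\sigma$ with multiplicity and is therefore at least $1$, because $\tau\otimes\bar\sigma$ is a nonzero representation. A strictly positive square matrix is primitive, so by Perron--Frobenius its positive eigenvector is unique up to a positive scalar. As $d$ and $q$ are both positive eigenvectors of $M$ (necessarily for the Perron eigenvalue), they are proportional, say $d=c\,q$ with $c>0$. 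Evaluating $r$ on the trivial representation $1$, which is the unit of $R(\aG)$ and has dimension $1$, gives $1=r(1)=c\cdot 1$, so $c=1$ and $r(\rho)=\dim\rho$ for every $\rho\in\hat\aG$, as claimed. (Alternatively, one could avoid Perron--Frobenius altogether by using that the regular representation $\lambda$ of the finite quantum group satisfies $\pi\otimes\lambda\cong(\dim\pi)\,\lambda$ and hence $r(\pi)\,r(\lambda)=(\dim\pi)\,r(\lambda)$ with $r(\lambda)=\sum_\pi(\dim\pi)r(\pi)>0$.)

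I expect the only genuine obstacle to be bookkeeping rather than depth: keeping the index conventions in the $M_\pi$ and the direction of the Frobenius reciprocity isomorphism consistent, so that the eigenvalue comes out as $r(\bar\pi)$ rather than $r(\pi)$, and noting that no commutativity of $R(\aG)$ is required, since the argument uses only strict positivity — hence primitivity — of the single matrix $M$, not any commutation among the $M_\pi$. As a sanity check, the statement together with this argument also recovers the fact that for a finite quantum group the quantum dimension $d_\pi$ of the preliminaries coincides with $\dim\pi$, since $d_\pi$ is itself a strictly positive ring homomorphism on $R(\aG)$.
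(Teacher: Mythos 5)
Your argument is correct, but it follows a genuinely different route from the paper's. You compare the given homomorphism $r$ directly with $\dim$: via Frobenius reciprocity you exhibit $(r(\rho))_{\rho\in\hat\aG}$ and $(\dim\rho)_{\rho\in\hat\aG}$ as common positive eigenvectors of the fusion matrices $M_\pi$, note that $M=\sum_{\pi\in\hat\aG}M_\pi$ is entrywise strictly positive, and invoke the uniqueness part of Perron--Frobenius, normalizing at the trivial representation. The paper instead takes two arbitrary positive ring homomorphisms $r_1,r_2$, forms the row-stochastic matrix $T(\pi)_{\rho,\sigma}=m(\sigma,\rho\tensor\pi)\,r_1(\sigma)/r_1(\rho\tensor\pi)$, checks that the ratio vector $c_\rho=r_2(\rho)/r_1(\rho)$ is an eigenvector with eigenvalue $r_2(\pi)/r_1(\pi)$, and deduces $r_2(\pi)\le r_1(\pi)$ from the fact that eigenvalues of a stochastic matrix lie in the closed unit disc; exchanging $r_1$ and $r_2$ gives equality, and since $\dim$ is such a homomorphism the claim follows. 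Both are elementary nonnegative-matrix arguments over the finite set $\hat\aG$, but the paper's version needs neither conjugate representations nor Perron--Frobenius uniqueness (only the spectral radius bound and additivity of $r_2$ over the decomposition of $\rho\tensor\pi$), whereas your route buys a direct identification with the dimension vector and, in your parenthetical variant using the absorption property $\pi\tensor\lambda\cong(\dim\pi)\lambda$ of the regular representation, an even shorter argument avoiding spectral theory altogether. If you write yours up, do state explicitly the input $\dim\Hom(\sigma,\pi\tensor\rho)=\dim\Hom(\bar\pi\tensor\sigma,\rho)$ (Frobenius reciprocity in the rigid ${\rm C}^*$-tensor category $\rep\aG$), which is valid but is a genuine ingredient your proof uses and the paper's does not.
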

\begin{proof}
	Let $r_1,r_2\colon R(\aG) \to \R$ be two such positive, non-zero ring homomorphisms and let us f\/ix $\pi \in \hat \aG$. We consider the matrix $T(\pi)$ with rows and columns index by $\hat \aG$ given by
	\begin{gather*}
		T(\pi)_{\rho, \sigma} := \frac{m(\sigma, \rho \tensor \pi) r_1(\sigma)}{r_1(\rho \tensor \pi)}
	\end{gather*}
	for all $\rho,\sigma \in \hat \aG$, where $m(\sigma, \rho \tensor \pi)$ denotes the multiplicity of $\sigma$ in $\rho \tensor \pi$. A straightforward computation verif\/ies that $T(\pi)$ is a stochastic matrix. Moreover, the vector $c = (c_\rho)_{\rho \in \hat \aG}$ with $c_\rho := r_2(\rho) / r_1(\rho)$ is an eigenvector of $T(\pi)$ with eigenvalue $\lambda = r_2(\pi) / r_1(\pi)$, because the homomorphism property implies
	\begin{gather*}
		\bigl( T(\pi) c \bigr)_\rho	= \frac{1}{r_1(\rho \tensor \pi)} \sum_{\sigma \in \hat \aG} m(\sigma, \rho \tensor \pi) r_2(\sigma)
		= \frac{r_2(\rho \tensor \pi)}{r_1(\rho \tensor \pi)}	= \frac{r_2(\pi)}{r_1(\pi)} c_\rho.
	\end{gather*}
Since all eigenvalues of stochastic matrices lie in the unit disc, we now conclude that $r_2(\pi) \le r_1(\pi)$. Exchanging the role of $r_1$ and $r_2$ likewise yields $r_2(\pi) \le r_1(\pi)$ and consequently we obtain $r_1 = r_2$.
\end{proof}

\begin{Theorem}\label{thm:cleft covering}Let $\theta \in\R$ be irrational and non-quadratic. Furthermore, let $\aG$ be a finite quantum group. Then every free compact ${\rm C}^*$-dynamical system $(\aA, \aG, \alpha)$ with fixed point algebra~$\alg A_\theta^2$ is~cleft.
\end{Theorem}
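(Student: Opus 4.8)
The plan is to combine the classification of free systems (Theorem~\ref{thm:main_class_thm}) with the cleftness criterion of Lemma~\ref{lem:cleft} and the $K$-theory of the irrational rotation algebra $\aB:=\alg A_\theta^2$. Fix any factor system $(\hH,\gamma,\omega)$ for $(\aA,\aG,\alpha)$; by Lemma~\ref{lem:cleft} it suffices to show that for every $(\pi,V)\in\hat\aG$ the projection $p_\pi:=\gamma_\pi(\one_\aB)=s(\pi)^*s(\pi)\in\End(\hH_\pi)\tensor\aB$ is Murray--von~Neumann equivalent over $\aB$ to $\one_V\tensor\one_\aB$. Since $\theta$ is irrational, $\aB$ is a simple unital ${\rm C}^*$-algebra carrying a unique, faithful tracial state; it has stable rank one, hence cancellation of projections, and its ordered $K_0$-group is $\Z+\theta\Z$ with positive cone $\{x>0\}\cup\{0\}$ and with $[\one_\aB]$ corresponding to $1$. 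In particular two projections over $\aB$ are Murray--von~Neumann equivalent exactly when they represent the same class in $K_0(\aB)$, so the theorem reduces to proving $[p_\pi]=(\dim V_\pi)\,[\one_\aB]$ in $K_0(\aB)$ for every irreducible $\pi$.

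First I would extract a scalar invariant. Each $^*$-homomorphism $\gamma_\pi\colon\aB\to\End(\hH_\pi)\tensor\aB\cong M_{\dim\hH_\pi}(\aB)$ induces, by $K_0$-stability, a group endomorphism $(\gamma_\pi)_*$ of $K_0(\aB)\cong\Z+\theta\Z$, and since $\gamma_\pi$ carries projections to projections $(\gamma_\pi)_*$ is positive. By Lemma~\ref{lem:pos_homom} it is multiplication by the element $r(\pi):=(\gamma_\pi)_*([\one_\aB])=[p_\pi]\in\Z+\theta\Z$. As $p_\pi$ is a nonzero projection and the trace is faithful, $r(\pi)>0$; and since $\theta$ is non-quadratic, Remark~\ref{rem:integers} forces $r(\pi)\in\Z_{>0}$. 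The normalization $\gamma_1=\id_\aB$ gives $r(1)=1$, and $\gamma_{\pi\oplus\rho}=\gamma_\pi\oplus\gamma_\rho$ gives $r(\pi\oplus\rho)=r(\pi)+r(\rho)$, so $r$ extends additively to the representation ring $R(\aG)$.

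Next I would establish multiplicativity of $r$ using the range and cokernel identities~\eqref{eq:ranges_sys}. There $\omega(\pi,\rho)$ is a partial isometry with $\omega(\pi,\rho)\omega(\pi,\rho)^*=\gamma_{\pi\tensor\rho}(\one_\aB)$ and $\omega(\pi,\rho)^*\omega(\pi,\rho)=\id\tensor\gamma_\rho(\gamma_\pi(\one_\aB))$, so $p_{\pi\tensor\rho}$ and $\gamma_\rho(p_\pi)$ are Murray--von~Neumann equivalent and hence equal in $K_0(\aB)$; therefore $[p_{\pi\tensor\rho}]=(\gamma_\rho)_*([p_\pi])=(\gamma_\rho)_*\bigl(r(\pi)[\one_\aB]\bigr)=r(\pi)\,r(\rho)\,[\one_\aB]$, where the integrality of $r(\pi)$ lets the group homomorphism $(\gamma_\rho)_*$ pass through the scalar. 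Comparing with $[p_{\pi\tensor\rho}]=r(\pi\tensor\rho)[\one_\aB]$ and using that $K_0(\aB)$ is torsion-free with $[\one_\aB]\neq0$ yields $r(\pi\tensor\rho)=r(\pi)r(\rho)$. Hence $r\colon R(\aG)\to\R$ is a ring homomorphism with $r(\pi)>0$ for all $\pi\in\hat\aG$, so Lemma~\ref{lem:charac_R(G)} forces $r(\pi)=\dim V_\pi$. Then $[p_\pi]=(\dim V_\pi)[\one_\aB]=[\one_V\tensor\one_\aB]$, by cancellation $p_\pi$ and $\one_V\tensor\one_\aB$ are Murray--von~Neumann equivalent over $\aB$, and Lemma~\ref{lem:cleft} shows $(\aA,\aG,\alpha)$ is cleft.

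The structure theory of $\alg A_\theta^2$ and the bookkeeping with Lemmas~\ref{lem:pos_homom} and~\ref{lem:charac_R(G)} are routine. The step I expect to require the most care is the multiplicativity of $r$: one has to read~\eqref{eq:ranges_sys} correctly as an equivalence of projections, and --- this is precisely where the non-quadratic hypothesis is genuinely used, via Remark~\ref{rem:integers} --- one needs $r(\pi)$ to be an honest integer so that $(\gamma_\rho)_*$ interacts with it as an ordinary integer scalar rather than as an element of the ring $\Z+\theta\Z$.
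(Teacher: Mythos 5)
Your proposal is correct and follows essentially the same route as the paper: $K_0(\gamma_\pi)$ is a positive endomorphism of $\Z+\theta\Z$, Lemma~\ref{lem:pos_homom} and Remark~\ref{rem:integers} make it multiplication by a positive integer $r(\pi)$, multiplicativity of $r$ lets Lemma~\ref{lem:charac_R(G)} force $r(\pi)=\dim\pi$, and Rieffel's cancellation together with Lemma~\ref{lem:cleft} finishes the argument. The only minor difference is that you obtain $r(\pi\tensor\rho)=r(\pi)r(\rho)$ from the range identities~\eqref{eq:ranges_sys} (reading $\omega(\pi,\rho)$ as a Murray--von~Neumann equivalence between $\gamma_{\pi\tensor\rho}(\one)$ and $\gamma_\rho(\gamma_\pi(\one))$, plus integrality of $r(\pi)$), whereas the paper composes the induced $K_0$-maps via the coaction condition~\eqref{eq:coaction_sys}; both amount to the same use of the factor-system relations.
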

\begin{proof} Let $(\aA, \aG, \alpha)$ be a free compact ${\rm C}^*$-dynamical system with $\aA^\aG = \alg A_\theta^2$ and let $(\hH, \gamma, \omega)$ be a factor system of $(\aA, \aG, \alpha)$. Then for every representation $\pi$ of $\aG$ the $^*$-homomorphism $\gamma_\pi\colon \alg A_\theta^2 \to \alg A_\theta^2 \tensor \End(\hH_\pi)$ induces a positive group homomorpism
	\begin{gather*}
		K_0(\gamma_\pi)\colon \ \Z + \theta \Z \longrightarrow \Z + \theta \Z,
	\end{gather*}
where we have identif\/ied $K_0(\alg A_\theta^2 \tensor \End(\hH_\pi))$ with $K_0(\alg A_\theta^2) = \Z + \theta \Z$. By Remark~\ref{rem:integers}, this group homomorphism must be a positive integer of the identity, say for some factor $r(\pi) > 0$. Given two representations $\pi, \rho$ of $\aG$, we clearly have $r(\pi \oplus \rho) = r(\pi) + r(\rho)$. Moreover, the coaction condition of the factor system implies that $K_0(\gamma_\rho) \circ K_0(\gamma_\pi) = K_0(\gamma_{\pi \tensor \rho})$ and therefore that $r(\rho) \cdot r(\pi) = r(\pi \tensor \rho)$. As a consequence, we may extend the map $\pi \mapsto r(\pi)$ to a ring-homomorphism $r\colon R(\aG) \to \R$. Lemma~\ref{lem:charac_R(G)} then shows that $r(\pi) = \dim(\pi)$ holds for every $\pi \in \hat \aG$ and hence we obtain
	\begin{gather*}	[\gamma_\pi(\one)]	= K_0(\gamma_\pi)[\one]	= r(\pi) \cdot [\one]= \dim(\pi) \cdot [\one]
	\end{gather*}
in $K_0(\alg A_\theta^2)$, i.e., the projections $\gamma_\pi(\one)$ and $\one_\pi \tensor \one_{\alg A_\theta^2} \in \End(V_\pi) \tensor \alg A_\theta^2$ are stably equivalent. Since stable equivalence and Murray--von~Neumann equivalence coincide for the ${\rm C}^*$-algebra $\alg A_\theta^2$ (see~\mbox{\cite{Rieffel83,Rieffel85}}), we f\/inally conclude from Lemma~\ref{lem:cleft} that $(\aA, \aG, \alpha)$ is cleft.
\end{proof}

\appendix
\section{Frames for Morita equivalence bimodules}\label{appendixA}

In this appendix we show that Morita equivalence bimodules between unital ${\rm C}^*$-algebras admit a so-called \emph{standard module frames}. Although this might be well-known to experts, we have not found such a statement explicitly discussed in the literature.

\begin{Lemma}	\label{lem:diagonalize} Let $M$ be a Morita equivalence between unital ${\rm C}^*$-algebras $\aA$ and $\aB$. Then there are elements $x_1, \dots, x_n \in M$ with
\begin{gather*} \sum\limits_{i=1}^n \lprod{\aA}{x_i, x_i} = \one.\end{gather*}
In particular, for any collection of such elements we have a Fourier decomposition given for all $x \in M$ by
\begin{gather*}
x = \sum_{i=1}^n x_k \rprod{\aB}{x_k,x}.
\end{gather*}
\end{Lemma}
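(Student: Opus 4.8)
The plan is to construct the frame in two stages: first invoke fullness of the left inner product together with unitality of $\aA$ to obtain a \emph{finite} family whose left-inner-product sum is close to $\one$, and then rescale it by an invertible element of $\aA$ so that the sum becomes exactly $\one$. The Fourier decomposition will follow at once from the imprimitivity compatibility between the two inner products.

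Concretely, since $M$ is a Morita equivalence the left inner product is full, so $\aA = \overline{\lin\{\lprod{\aA}{x,y}\,|\,x,y\in M\}}$, and by polarization the real linear span of the positive elements $\lprod{\aA}{x,x}$, $x\in M$, is already dense in $\aA$. As $\one\in\aA$, I would pick finitely many $y_1,\dots,y_n\in M$ with $\norm{\one-a}<1$, where $a:=\sum_{i=1}^n\lprod{\aA}{y_i,y_i}\ge 0$; such an $a$ is invertible in $\aA$, and $a^{-1/2}$ lies in $\aA$ and is self-adjoint. Setting $x_i:=a^{-1/2}\acts y_i$ and using $\aA$-linearity of $\lprod{\aA}{\cdot,\cdot}$ in the first slot, conjugate-linearity in the second, and $(a^{-1/2})^*=a^{-1/2}$, I obtain
\begin{gather*}
\sum_{i=1}^n\lprod{\aA}{x_i,x_i}=a^{-1/2}\Bigl(\sum_{i=1}^n\lprod{\aA}{y_i,y_i}\Bigr)a^{-1/2}=a^{-1/2}\,a\,a^{-1/2}=\one,
\end{gather*}
which proves the first claim.

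For the Fourier decomposition, let $x_1,\dots,x_n\in M$ be any family with $\sum_i\lprod{\aA}{x_i,x_i}=\one$ and let $x\in M$. Then $x=\one\acts x=\sum_{i=1}^n\lprod{\aA}{x_i,x_i}\acts x$, and applying the defining compatibility relation of an equivalence bimodule, $\lprod{\aA}{u,v}\acts w=u\acts\rprod{\aB}{v,w}$ for $u,v,w\in M$, to each summand gives $x=\sum_{i=1}^n x_i\acts\rprod{\aB}{x_i,x}$. The whole argument is essentially routine; the one step deserving a moment's care is extracting a finite approximate frame from a dense span and noting that a positive element within distance $1$ of $\one$ is invertible, after which the bimodule axioms do all the work.
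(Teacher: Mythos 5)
Your second half is sound: if some finite sum $a=\sum_i\lprod{\aA}{y_i,y_i}$ is invertible, then $x_i:=a^{-1/2}\acts y_i$ does the job, and the identity $\lprod{\aA}{u,v}\acts w=u\acts\rprod{\aB}{v,w}$ yields the Fourier decomposition exactly as in the paper. The gap is in how you produce such an $a$. Fullness plus polarization only gives density of the \emph{linear span} of the elements $\lprod{\aA}{x,x}$ (and the real span can at best be dense in the self-adjoint part of $\aA$, not in all of $\aA$). What you actually need is that $\one$ is a norm limit of plain sums $\sum_i\lprod{\aA}{y_i,y_i}$, i.e., of elements of the \emph{cone} generated by the diagonal inner products. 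Density of a linear span does not place $\one$ in the closure of that cone: polarization produces negative and imaginary coefficients, and a term $\lambda\lprod{\aA}{u,u}$ with $\lambda<0$ cannot be absorbed into the vector $u$. So the sentence ``pick finitely many $y_1,\dots,y_n$ with $\norm{\one-a}<1$'' is precisely the point that requires proof, and it is essentially the content of the lemma.

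The paper closes this by a different route, which (or an equivalent of which) you would need. Since $\lin\lprod{\aA}{M,M}$ is a dense two-sided ideal of the unital algebra $\aA$, it contains an invertible element and therefore equals $\aA$; this gives an exact identity $\one=\sum_i\lprod{\aA}{x_i,y_i}$ with \emph{mixed} entries. The nontrivial step is to convert this mixed sum into a diagonal one: the paper factors the positive matrix $Y=\bigl(\rprod{\aB}{y_i,y_j}\bigr)_{i,j}\in M_n(\aB)$ as $Y=RR^*$ and sets $z_k:=\sum_i x_i\acts R_{i,k}$, and a computation using $\lprod{\aA}{x_i,y_i}\acts y=x_i\acts\rprod{\aB}{y_i,y}$ gives $\sum_k\lprod{\aA}{z_k,z_k}=\one$ on the nose, so no rescaling is even necessary. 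If you prefer to keep your approximate-and-rescale scheme, you must replace the polarization argument by something that genuinely produces positive diagonal sums near $\one$ --- for instance the standard fact that the compact module operators on $M$ (isomorphic to $\aA$ via $\lprod{\aA}{x,y}\mapsto\theta_{x,y}$) admit an approximate unit consisting of finite sums $\sum_i\theta_{x_i,x_i}$ --- but the usual proof of that fact runs through the same matrix-factorization trick.
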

\begin{proof}The linear span of left inner products $J := \lprod{\aA}{M,M}$ is a dense ideal in $\aA$. Since the invertible elements of $\aA$ form an open subset, $J$ contains invertible elements and hence $J = \aA$. That is, there are elements $x_1, \dots, x_n \in M$ and $y_1, \dots, y_n \in M$ with
\begin{gather*}
\one = \sum\limits_{i=1}^n \lprod{\aA}{x_i, y_i}.
\end{gather*} Then the Morita equivalence property implies
	\begin{gather}		\label{eq:diagonalize:ME}
		y = \one \acts y = \sum_{i=1}^n	\lprod{\aA}{x_i,y_i} \acts y = \sum_{i=1}^n x_i \acts \rprod{\aB}{y_i,y}
	\end{gather}
for every $y \in M$. Now consider the matrix $Y \in \aB \tensor M_n$ given by $Y_{i,j} := \rprod{\aB}{y_i,y_j}$ for $1 \le i,j \le n$. Since $Y$ is positive, we f\/ind a matrix $R = (R_{i,j})_{i,j}$ in $\aB \tensor M_n$ with $Y = R R^*$. Putting
\begin{gather*}
z_k := \sum\limits_{i=1}^n x_i \acts R_{i,k}
\end{gather*} for all $1 \le j \le n$ we f\/ind
	\begin{gather*}
		\sum_{k=1}^n \lprod{\aA}{z_k, z_k}= \sum_{i,j,k=1}^n \lprod{\aA}{ x_i \acts R_{i,k}, x_j \acts R_{j,k}}
		= \sum_{i,j=1}^n \lprod[\bigg]{\aA}{ x_i \acts \left( \sum_{k=1}^n R_{i,k} R_{j,k}^* \right), x_j }\\
\hphantom{\sum_{k=1}^n \lprod{\aA}{z_k, z_k}}{} = \sum_{i,j=1}^n \lprod{\aA}{ x_i \acts \rprod{\aB}{y_i,y_j}, x_j}
		\overset{\eqref{eq:diagonalize:ME}}= \sum_{j=1}^n \lprod{\aA}{y_j, x_j}	= \one.\tag*{\qed}
\end{gather*}\renewcommand{\qed}{}
\end{proof}

\subsection*{Acknowledgments}

We would like to acknowledge the Center of Excellence in Analysis and Dynamics Research (Academy of Finland, decision no.~271983 and no.~1138810) for supporting this research. The second name author also thanks the research fonds of the Department of Mathematics of the University of Hamburg. We would also like to express our greatest gratitude to the referees for providing very fruitful criticism.

\pdfbookmark[1]{References}{ref}
\LastPageEnding


\begin{thebibliography}{99}
\footnotesize\itemsep=0pt

\bibitem{AlHo80}
Albeverio S., H{\o}egh-Krohn R., Ergodic actions by compact groups on
 {$C^{\ast}$}-algebras, \href{https://doi.org/10.1007/BF01215076}{\textit{Math.~Z.}} \textbf{174} (1980), 1--17.

\bibitem{Ban99}
Banica T., Fusion rules for representations of compact quantum groups,
 \textit{Exposition. Math.} \textbf{17} (1999), 313--337,
 \href{https://arxiv.org/abs/math.QA/9811039}{math.QA/9811039}.

\bibitem{BaCoHa15}
Baum P.F., De~Commer K., Hajac P.M., Free actions of compact quantum group on
 unital {${\rm C}^*$}-algebras, \href{https://arxiv.org/abs/1304.2812}{arXiv:1304.2812}.

\bibitem{BiRiVa06}
Bichon J., De~Rijdt A., Vaes S., Ergodic coactions with large multiplicity and
 monoidal equivalence of quantum groups, \href{https://doi.org/10.1007/s00220-005-1442-2}{\textit{Comm. Math. Phys.}}
 \textbf{262} (2006), 703--728, \href{https://arxiv.org/abs/math.QA/0502018}{math.QA/0502018}.

\bibitem{CoLa01}
Connes A., Landi G., Noncommutative manifolds, the instanton algebra and
 isospectral deformations, \href{https://doi.org/10.1007/PL00005571}{\textit{Comm. Math. Phys.}} \textbf{221} (2001),
 141--159, \href{https://arxiv.org/abs/math.QA/0011194}{math.QA/0011194}.

\bibitem{Commer16}
De~Commer K., Actions of compact quantum groups, \href{https://arxiv.org/abs/1604.00159}{arXiv:1604.00159}.

\bibitem{CoYa13a}
De~Commer K., Yamashita M., A construction of f\/inite index {${\rm
 C}^*$}-algebra inclusions from free actions of compact quantum groups,
 \href{https://doi.org/10.4171/PRIMS/117}{\textit{Publ. Res. Inst. Math. Sci.}} \textbf{49} (2013), 709--735,
 \href{https://arxiv.org/abs/1201.4022}{arXiv:1201.4022}.

\bibitem{CoYa13b}
De~Commer K., Yamashita M., Tannaka--{K}rein duality for compact quantum
 homogeneous spaces. {I}.~{G}eneral theory, \textit{Theory Appl. Categ.}
 \textbf{28} (2013), no.~31, 1099--1138, \href{https://arxiv.org/abs/1211.6552}{arXiv:1211.6552}.

\bibitem{EchNeOy09}
Echterhof\/f S., Nest R., Oyono-Oyono H., Principal non-commutative torus
 bundles, \href{https://doi.org/10.1112/plms/pdn050}{\textit{Proc. Lond. Math. Soc.}} \textbf{99} (2009), 1--31,
 \href{https://arxiv.org/abs/0810.0111}{arXiv:0810.0111}.

\bibitem{Ell00}
Ellwood D.A., A new characterisation of principal actions, \href{https://doi.org/10.1006/jfan.2000.3561}{\textit{J.~Funct.
 Anal.}} \textbf{173} (2000), 49--60.

\bibitem{Gabriel2014}
Gabriel O., Fixed points of compact quantum groups actions on {C}untz algebras,
 \href{https://doi.org/10.1007/s00023-013-0265-5}{\textit{Ann. Henri Poincar\'e}} \textbf{15} (2014), 1013--1036,
 \href{https://arxiv.org/abs/1210.5630}{arXiv:1210.5630}.

\bibitem{GabWeb16}
Gabriel O., Weber M., Fixed point algebras for easy quantum groups,
 \href{https://doi.org/10.3842/SIGMA.2016.097}{\textit{SIGMA}} \textbf{12} (2016), 097, 21~pages, \href{https://arxiv.org/abs/1606.00569}{arXiv:1606.00569}.

\bibitem{GooLaPe94}
Gootman E.C., Lazar A.J., Peligrad C., Spectra for compact group actions,
 \textit{J.~Operator Theory} \textbf{31} (1994), 381--399.

\bibitem{KoYuNaMaWaYa92}
Konishi Y., Nagisa M., Watatani Y., Some remarks on actions of compact matrix
 quantum groups on {$C^*$}-algebras, \href{https://doi.org/10.2140/pjm.1992.153.119}{\textit{Pacific~J. Math.}} \textbf{153}
 (1992), 119--127.

\bibitem{LaSu05}
Landi G., van Suijlekom W., Principal f\/ibrations from noncommutative spheres,
 \href{https://doi.org/10.1007/s00220-005-1377-7}{\textit{Comm. Math. Phys.}} \textbf{260} (2005), 203--225,
 \href{https://arxiv.org/abs/math.QA/0410077}{math.QA/0410077}.

\bibitem{Marc98}
Marciniak M., Actions of compact quantum groups on {$C^*$}-algebras,
 \href{https://doi.org/10.1090/S0002-9939-98-04066-0}{\textit{Proc. Amer. Math. Soc.}} \textbf{126} (1998), 607--616.

\bibitem{Ne13}
Neshveyev S., Duality theory for nonergodic actions, \href{https://doi.org/10.17879/58269764511}{\textit{M\"unster~J.
 Math.}} \textbf{7} (2014), 413--437, \href{https://arxiv.org/abs/1303.6207}{arXiv:1303.6207}.

\bibitem{NeTu13}
Neshveyev S., Tuset L., Compact quantum groups and their representation
 categories, \textit{Cours Sp\'ecialis\'es}, Vol.~20, Soci\'et\'e
 Math\'ematique de France, Paris, 2013.

\bibitem{OlPeTa80}
Olesen D., Pedersen G.K., Takesaki M., Ergodic actions of compact abelian
 groups, \textit{J.~Operator Theory} \textbf{3} (1980), 237--269.

\bibitem{Pel88}
Peligrad C., Locally compact group actions on {$C^*$}-algebras and compact
 subgroups, \href{https://doi.org/10.1016/0022-1236(88)90052-3}{\textit{J.~Funct. Anal.}} \textbf{76} (1988), 126--139.

\bibitem{Phi87}
Phillips N.C., Equivariant {$K$}-theory and freeness of group actions on
 {$C^*$}-algebras, \href{https://doi.org/10.1007/BFb0078657}{\textit{Lecture Notes in Math.}}, Vol.~1274,
 Springer-Verlag, Berlin, 1987.

\bibitem{Phi09}
Phillips N.C., Freeness of actions of f\/inite groups on {$C^*$}-algebras, in
 Operator Structures and Dynamical Systems, \href{https://doi.org/10.1090/conm/503/09902}{\textit{Contemp. Math.}}, Vol.~503,
 Amer. Math. Soc., Providence, RI, 2009, 217--257.

\bibitem{Podles95}
Podle\'s P., Symmetries of quantum spaces. {S}ubgroups and quotient spaces of
 quantum {${\rm SU}(2)$} and {${\rm SO}(3)$} groups, \href{https://doi.org/10.1007/BF02099436}{\textit{Comm. Math.
 Phys.}} \textbf{170} (1995), 1--20, \href{https://arxiv.org/abs/hep-th/9402069}{hep-th/9402069}.

\bibitem{Rieffel83}
Rief\/fel M.A., The cancellation theorem for projective modules over irrational
 rotation {$C^*$}-algebras, \href{https://doi.org/10.1112/plms/s3-47.2.285}{\textit{Proc. London Math. Soc.}} \textbf{47}
 (1983), 285--302.

\bibitem{Rieffel85}
Rief\/fel M.A., Nonstable {$K$}-theory and noncommutative tori, in Operator
 Algebras and Mathematical Physics ({I}owa {C}ity, {I}owa, 1985),
 \href{https://doi.org/10.1090/conm/062/878384}{\textit{Contemp. Math.}}, Vol.~62, Amer. Math. Soc., Providence, RI, 1987,
 267--279.

\bibitem{Rieffel91}
Rief\/fel M.A., Proper actions of groups on {$C^*$}-algebras, in Mappings of
 Operator Algebras ({P}hiladelphia, {PA}, 1988), \href{https://doi.org/10.1007/978-1-4612-0453-4_6}{\textit{Progr. Math.}},
 Vol.~84, Editors H.~Araki, R.V.~Kadison, Birkh\"auser Boston, Boston, MA,
 1990, 141--182.

\bibitem{SchWa15}
Schwieger K., Wagner S., Part~{I}, {F}ree actions of compact {A}belian groups
 on {${\rm C}^*$}-algebras, \href{https://doi.org/10.1016/j.aim.2017.06.036}{\textit{Adv. Math.}} \textbf{317} (2017), 224--266,
 \href{https://arxiv.org/abs/1505.00688}{arXiv:1505.00688}.

\bibitem{SchWa16}
Schwieger K., Wagner S., Part~{II}, {F}ree actions of compact groups on
 {${\rm C}^*$}-algebras, \href{https://doi.org/10.4171/JNCG/11-2-6}{\textit{J.~Noncommut. Geom.}} \textbf{11} (2017), 641--668,
 \href{https://arxiv.org/abs/1508.07904}{arXiv:1508.07904}.

\bibitem{Timm08}
Timmermann T., An invitation to quantum groups and duality. {F}rom {H}opf
 algebras to multiplicative unitaries and beyond, \href{https://doi.org/10.4171/043}{\textit{EMS Textbooks in
 Mathematics}}, European Mathematical Society (EMS), Z\"urich, 2008.

\bibitem{Wa12}
Wagner S., Free group actions from the viewpoint of dynamical systems,
 \textit{M\"unster~J. Math.} \textbf{5} (2012), 73--97, \href{https://arxiv.org/abs/1111.5562}{arXiv:1111.5562}.

\bibitem{Wass89}
Wassermann A., Ergodic actions of compact groups on operator algebras.
 {I}.~{G}eneral theory, \href{https://doi.org/10.2307/1971422}{\textit{Ann. of Math.}} \textbf{130} (1989), 273--319.

\bibitem{Wass88a}
Wassermann A., Ergodic actions of compact groups on operator algebras.
 {II}.~{C}lassif\/ication of full multiplicity ergodic actions,
 \href{https://doi.org/10.4153/CJM-1988-068-4}{\textit{Canad.~J. Math.}} \textbf{40} (1988), 1482--1527.

\bibitem{Wass88b}
Wassermann A., Ergodic actions of compact groups on operator algebras.
 {III}.~{C}lassif\/ication for {${\rm SU}(2)$}, \href{https://doi.org/10.1007/BF01394336}{\textit{Invent. Math.}}
 \textbf{93} (1988), 309--354.

\bibitem{Wor87a}
Woronowicz S.L., Compact matrix pseudogroups, \href{https://doi.org/10.1007/BF01219077}{\textit{Comm. Math. Phys.}}
 \textbf{111} (1987), 613--665.

\end{thebibliography}
\end{document}